\newtheorem{thm}{Theorem}[section]
\newtheorem{cor}[thm]{Corollary}
\newtheorem{lem}[thm]{Lemma}
\numberwithin{equation}{section}
\theoremstyle{definition}
\newtheorem{definition}[thm]{Definition}
\newtheorem{rem}[thm]{Remark}
\newtheorem{exa}[thm]{Example}
\begin{document}
%%%%%%%%%%%%%%%%%%%%%%%%%%%%%%%%%%%%%%%%%%%%%
%%%%%%%%%%%%%%%%%%%%%%%%%%%%%%%%%%%%%%%%%%%%%

\title[ Existence and uniqueness of some Cauchy Type Problems  ]{Existence and uniqueness of some Cauchy Type Problems in fractional $q$-difference calculus}

\author[Lars-Erik Persson]{L.E. Persson}
\address{
  Lars-Erik Persson:
  \endgraf
  Department of Computer Science and Computational Engineering
  \endgraf
  UiT The Artic University of Norway, Campus Narvik
  \endgraf
  Narvik, Norway
  \endgraf
   and
  \endgraf
  Department of computer science and mathematics, Karlstad university
  \endgraf
  Karlstad, Sweden.
  \endgraf
  {\it E-mail address} {\rm larserik6pers@gmail.com}
  }

\author[Serikbol Shaimardan]{S. Shaimardan}
\address{
  Serikbol Shaimardan:
  \endgraf
  L. N. Gumilyev Eurasian National University
  \endgraf
  5 Munaytpasov str., Astana, 010008
  \endgraf
  Kazakhstan
  \endgraf
  {\it E-mail address} {\rm shaimardan.serik@gmail.com}
  }
\author[Nariman Sarsenovich Tokmagambetov]{N.S. Tokmagambetov}
\address{
  Nariman Sarsenovich Tokmagambetov:
  \endgraf
    L. N. Gumilyev Eurasian National University
  \endgraf
   5 Munaytpasov str., Astana, 010008
  \endgraf
  Kazakhstan,
  \endgraf
    {\it E-mail address} {\rm nariman.tokmagambetov@gmail.com}
 }

\date{}

\begin{abstract}
In this paper we derive a sufficient condition for the existence of a unique
  solution of a Cauchy type $q$-fractional problem (involving the fractional $q$-derivative of Riemann-Liouville type)
   for some nonlinear differential equations. The key technique is to first prove that this Cauchy type $q$-fractional problem is equivalent to a corresponding Volterra $q$-integral equation. Moreover, we define the $q$-analogue of the Hilfer fractional derivative or composite fractional derivative operator and prove some similar new equivalence, existence and uniqueness results as above. Finally, some examples are presented to illustrate our main results in cases where we can even give concrete formulas for these unique solutions.
\end{abstract}

\subjclass[2010]{39A10, 39A70, 47B39, 26D15.}

\keywords{Cauchy type $q$-fractional problem, existence, uniqueness, $q$-derivative, $q$-calculus, fractional calculus,  Riemann–Liouville fractional derivative, Hilfer fractional derivative}

\maketitle

{\section{Introduction}}
%%%%%%%%%%%%%%%%%%%%%%%%%%%%%%%%%%%%%%%%%%%%%%%%%%%%%%%%%%%%%%%
%%%%%%%%%%%%%%%%%%%%%%%%%%%%%%%%%%%%%%%%%%%%%%%%%%%%%%%%%%%%%%%

Nonlinear fractional differential equations   play important roles  due to their numerous applications and also for the important role they play  not only in mathematics but also in other sciences. In particular, they   arise naturally in real world phenomena related to physics, chemistry, biology,  signal-and image processing. Moreover, they are equipped with social sciences such as food supplement, climate and economics, see e.g. \cite{HP2006, H2000}. Therefore during the last twenty years, there has been a significant development in ordinary and partial differential equations involving fractional derivatives and a huge amount of papers and also some books devoted to this subject in various spaces have appeared, see e.g. the monographs of T. Sandev  and  Z. Tomovski \cite{TZ2019}, A.A. Kilbas et al. \cite{KST2006}, R. Hilfer \cite{H2000}, K.S. Miller and the B. Ross \cite{MR1993}, the papers  \cite{H2002}, \cite{T2012}, \cite{BG2003}, \cite{B2008}, \cite{L2008},  \cite{K2009}, \cite{LEP}, \cite{Shaimardan15}, and  \cite{Shaimardan16} and the references therein.

The origin of the  $q$-difference calculus can be traced back to the works  in \cite{J1908, J1910} by  F. Jackson and R.D. Carmichael \cite{C1912} from the beginning of the twentieth century, while basic definitions and properties can be found e.g. in the monographs \cite{CK2000, E2002}.  Recently,  the fractional $q$-difference calculus has been proposed by W. Al-salam  \cite{A1966} and R.P. Agarwal \cite{A1969}. Today, maybe due to the explosion in research within the fractional differential calculus setting, new developments in this theory of fractional $q$-difference calculus have been addressed extensively by several researchers. For example, some researchers obtained $q$-analogues of the integral and differential fractional operators properties such as the $q$-Laplace transform and $q$-Taylor’s formula \cite{PMS2007}, $q$-Mittag-Leffler function \cite{A1966}  and so on.

We also pronounce that up to now much attention have been focused on the fractional $q$-difference equations. There have been some papers dealing with the existence and uniqueness or multiplicity of solutions for nonlinear fractional q-difference equations by the use of some well-known fixed point theorems. For some recent developments on the subject, see e.g. \cite{AM2012}, \cite{ZCZ2013}, \cite{F2011}, \cite{F2010} and the references therein. In Section 3 of this paper we continue such research by proving some new results with focus on uniqueness. The main result in this Section is Theorem \ref{thm 3.2} but in order to prove this result we need to prove two results (Theorem \ref{thm 3.1} and Lemma \ref{lem 2.6}) of independent interest.

The notations used in this introduction are explained in our Section 2 below. In this paper,  we also focus more on the $q$-analogue of the Hilfer fractional derivative or composite fractional derivative operator (see Definition \ref{definition4.1} and Definition \ref{definition4.2}) and we derive a sufficient conditions for the existence of a unique solution of a Cauchy type $q$-fractional problem:
\begin{eqnarray}\label{additive1.1}
\left(D^{\alpha,\beta}_{q,a+}y\right)(x) &=& f\left(x,y(x)\right),\;\;\;n-1<\alpha\leq n; n\in\mathbb{N}, 0\leq \beta\leq 1,
\end{eqnarray}
\begin{eqnarray}\label{additive1.2}
\lim\limits_{x\rightarrow a+}\left(D^k_qI^{(n-\alpha)(1-\beta)}_{q,a+}y \right)(x)=b_k,  b_k\in\mathbb{R},  k=0,1,2,\dots n-1,
\end{eqnarray}
and as a particular case of this nonlinear model we have
\begin{eqnarray}\label{additive1.3}
\left(\mathcal{D}^{\alpha,\beta}_{q,a+}y\right)(x) &=& f\left(x,y(x)\right), \;\;\;0<\alpha\leq 1; 0\leq \beta\leq 1,
\end{eqnarray}
\begin{eqnarray}\label{additive1.14}
\lim\limits_{x\rightarrow a+}\left(\mathcal{D}^k_qI^{(1-\alpha)(1-\beta)}_{q,a+}y \right)(x)=b_k , b_k\in\mathbb{R}, k=0,1,2,\dots n-1,
\end{eqnarray}
where $\mathcal{D}^{\alpha,\beta}_{q,a+}$ and  $D^{\alpha,\beta}_{q,a+}$  are the  Hilfer fractional $q$-derivative or composite fractional $q$-derivative operators  and $f(.,.): [a,b]\times\mathbb{R}\rightarrow \mathbb{R}$, $0<a<b<\infty$ (see Theorem \ref{thm 4.2}). Moreover, for the proof of this theorem we prove an equivalence theorem (Theorem \ref{thm 4.1}) of independent interest. In the case when $\beta = 0$ this is the generalized Riemann–Liouville   fractional  $q$-derivative ( see Definition \ref{definition 2.2}) and in case when $\beta = 1$ it corresponds to the Caputo fractional $q$-derivative (see \cite{PMS2009}). We also prove a Lemma (Lemma 4.4) of independent interest.

The paper is organized as follows: The main results are presented and proved in Section 3  and Section 4 and the announced examples are given in Section 5. In order to not disturb these presentations we include in Section 2 some necessary Preliminaries. In particular, we state and prove a necessary lemma (Lemma \ref{lem 2.6}) of independent interest.

\;\;\;\;\;\;\;\;\;\;

%%%%%%%%%%%%%%%%%%%%%%%%%%%%%%%%%%%%%%%%%%%%%%%%%%%%%%%%%%%%%%%
%%%%%%%%%%%%%%%%%%%%%%%%%%%%%%%%%%%%%%%%%%%%%%%%%%%%%%%%%%%%%%%

{\section{Preliminaries}}
%%%%%%%%%%%%%%%%%%%%%%%%%%%%%%%%%%%%%%%%%%%%%%%%%%%%%%%%%%%%%%%
%%%%%%%%%%%%%%%%%%%%%%%%%%%%%%%%%%%%%%%%%%%%%%%%%%%%%%%%%%%%%%%

First we recall some elements of $q$-calculus, for more information see e.g. the books \cite{CK2000},  \cite{E2002} and \cite{AM2012}. Throughout this paper, we assume that $0<q<1$ and  $ 0\leq a < b <\infty $.

Let $\alpha \in \mathbb{R}$. Then a $q$-real number $[\alpha ]_q$ is defined by
$$
[\alpha ]_{q}:=\frac{1-q^{\alpha }}{1-q},
$$
where $\mathop{\lim }\limits_{q\rightarrow 1}\frac{1-q^{\alpha }}{1-q}%
=\alpha $.

We introduce for $k\in\mathbb{N}$:
\begin{eqnarray*}
(a;q)_0=1,  \; (a;q)_n=\prod\limits_{k=0}^n\left(1-q^ka\right),  \; (q;a)_\infty = \lim\limits_{n\rightarrow\infty}(a,q)^n_q, \; and \;
(a;q)_\alpha=\frac{(a;q)_\infty}{(q^\alpha a;q)_\infty}.
\end{eqnarray*}

The $q$-analogue of the power function $(a- b)_q^\alpha$ is defined by
 \begin{eqnarray*}
(a-b)_q^\alpha{:=}a^\alpha\frac{(\frac{a}{b};q)_\infty}{(q^\alpha\frac{a}{b};q)_\infty} .
\end{eqnarray*}

Notice that $(a-b)_q^\alpha=a^\alpha(\frac{a}{b};q)_\alpha$ .

The $q$-analogue of the binomial coefficients $[n]_{q}!$ are defined by
\begin{equation*}
[n]_{q}!:=\left\{
\begin{array}{l}
{1,\mathrm{\;\;\;\;\;\;\;\;\;\;\;\;\;\;\;\;\;\;\;\;\;\;\;\;\;\;\;\;\;\;\;\;%
\;if\;n}=\mathrm{0,}} \\
{[1]_{q}\times [2]_{q}\times \cdots \times [n]_{q},\mathrm{%
\;if\;n}\in \mathrm{N,\;\;}}%
\end{array}%
\right.
\end{equation*}

The gamma function $\Gamma_q(x)$ is defined by
\begin{eqnarray*}
\Gamma_q(x):=\frac{(q; q)_\infty }{(q^x; q)_\infty }(1-q)^{1-x}, \;\;\;
\end{eqnarray*}
for any $x > 0$. Moreover, it yields that
\begin{eqnarray}\label{additive2.0}
\Gamma_q(x)[x]_q=\Gamma_q(x+1).
\end{eqnarray}

The $q$-analogue differential operator $D_{q}f(x)$ is
\begin{eqnarray*}
D_{q}f(x):=\frac{f(x)-f(qx)}{x(1-q)},
\end{eqnarray*}
and the $q$-derivatives $D^{n}_q(f(x))$ of higher order are defined inductively as follows:
\begin{eqnarray*}
D_q^0(f(x)):=f(x), \;\;\; D^{n}_q(f(x)):=D_q\left(D_q^{n-1}f(x)\right), (n=1,2,3, \dots)
\end{eqnarray*}

Notice that
\begin{eqnarray}\label{additive2.1}
D_{q}\left[(x-b)_q^\alpha\right]&=&[\alpha]_q(x-b)_q^{\alpha-1}
\end{eqnarray}
and
\begin{eqnarray}\label{additive2.2}
D_{q}\left[(a-x)_q^\alpha\right]&=&-[\alpha]_q(a-qx)_q^{\alpha-1}.
\end{eqnarray}

The $q$-integral (or Jackson integral) $\int\limits_a^b f(x)d_{q}x$ is defined by
\begin{eqnarray}\label{additive2.3}
\int\limits_0^a f(x)d_{q}x:=(1-q)a\sum\limits_{m=0}^\infty q^{m}f(aq^{m})
\end{eqnarray}
for $a=0$ and
\begin{eqnarray}\label{additive2.4}
\int\limits_a^b f(x)d_{q}x=\int\limits_0^b f(x)d_{q}x-
\int\limits_0^a f(x)d_{q}x,
\end{eqnarray}
for $0<a<b$. Notice that
\begin{eqnarray}\label{additive2.5}
\int\limits_a^b D_{q}f(x)d_{q}x=f(b)-f(a)
\end{eqnarray}
and
\begin{eqnarray}\label{additive2.6}
\int\limits_a^b\int\limits_a^x f(x)g(t)d_{q}td_{q}x&=&\int\limits_a^b\int\limits_{qt}^b f(x)g(t)d_{q}xd_{q}t.
\end{eqnarray}

Moreover, the multiple $q$-integral $\left(I^n_{q,a+}f\right)(x)$ is
\begin{eqnarray}\label{additive2.7}
\left(I^n_{q,a+}f\right)(x)&:=&
\int\limits_a^x\int\limits_a^t\int\limits_a^{t_{n-1}}\dots\int\limits_a^{t_2}d_qt_1d_qt_2{\dots}d_qt_{n-1}d_qt\nonumber\\
&=&\frac{1}{\Gamma_q(n)}\int\limits_a^x(x-qt)_q^{n-1}f(t)d_qt.
\end{eqnarray}

\begin{definition}\label{definition2.1}
The Riemann-Liouville $q$-fractional integrals $I^\alpha_{a+}f$ of order $\alpha>0$
are defined by
\begin{eqnarray}\label{additive2.8A}
\left(I^\alpha_{q,a+}f\right)(x):=\frac{1}{\Gamma_q(\alpha)}\int\limits_a^x(x-qt)_q^{\alpha-1}f(t)d_qt.
\end{eqnarray}
\end{definition}

\begin{definition}\label{definition 2.2}The Riemann-Liouville fractional $q$-derivative $D^\alpha_{q,a+}f$ of order $\alpha>0$ is defined by
\begin{eqnarray*}
\left(D^\alpha_{q,a+}f\right)(x):=\left( D^{[\alpha]}_{q,a+}I^{[\alpha]-\alpha}_{q,a+}f\right)(x).
\end{eqnarray*}
\end{definition}

Notice that
\begin{eqnarray}\label{additive2.8}
\left(I^\alpha_{q,a+}(t-a)_q^\lambda\right)(x)=
\frac{\Gamma_q(\lambda+1)}{\Gamma_q(\alpha+\lambda+1)} (x-a)_q^{\alpha+\lambda},
\end{eqnarray}
for $ \lambda\in(-1,\infty)$.

For $1\leq p<\infty$ we  define the space $L^p_q=L^p_q [a, b]$ by
\begin{eqnarray*}
L^p_q [a, b] :=\left\{f:[a,b]\rightarrow \mathbb{ R}:\left(\int\limits_a^b |f(x)|^pd_qx\right)^\frac{1}{p}<\infty\right\}.
\end{eqnarray*}

\begin{lem}\label{lem 2.3} a) Let $\alpha>0$, $\beta>0$ and $1\leq p<\infty$. Then the $q$–fractional integrals has the following
semigroup property
$$
\left(I^\alpha_{q,a+}I^\beta_{q,a+}f\right)(x) =\left(I^{\alpha+\beta}_{q,a+}f\right)(x),
$$
for all  $x \in[a, b]$ and $f(x) \in L^p_q[a,b]$.

b) Let $\alpha>\beta > 0$, $1 \leq p<\infty$ and $f(x) \in L^p_q[a,b]$. Then the following
equalities
$$
\left(D^\alpha_{q,a+}I^\alpha_{q,a+}\right)(x)=f(x),\;\;\;\left(D^\beta_{q,a+}I^{\alpha }_{q,a+}f\right)(x)=\left(I^{\alpha-\beta}_{q,a+}f\right)(x),
$$
hold for all  $x\in[a,b]$.
\end{lem}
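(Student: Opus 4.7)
The plan is to establish part (a) by reducing to a $q$-Beta integral and then derive part (b) from (a) together with the fundamental theorem of $q$-calculus.

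For part (a), I would start from Definition \ref{definition2.1} to write
\[
(I^\alpha_{q,a+} I^\beta_{q,a+} f)(x) = \frac{1}{\Gamma_q(\alpha)\Gamma_q(\beta)} \int_a^x \int_a^t (x-qt)_q^{\alpha-1}(t-qs)_q^{\beta-1} f(s)\, d_qs\, d_qt,
\]
and then apply the order-of-integration swap (2.6) with $b=x$ to obtain
\[
(I^\alpha_{q,a+} I^\beta_{q,a+} f)(x) = \frac{1}{\Gamma_q(\alpha)\Gamma_q(\beta)} \int_a^x f(s) \left[\int_{qs}^x (x-qt)_q^{\alpha-1}(t-qs)_q^{\beta-1}\, d_qt\right] d_qs.
\]
The crux is then to show that the inner integral equals $\frac{\Gamma_q(\alpha)\Gamma_q(\beta)}{\Gamma_q(\alpha+\beta)}(x-qs)_q^{\alpha+\beta-1}$, i.e.\ a $q$-analogue of Euler's Beta identity. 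My first attempt would be the substitution $t=qs+(x-qs)u$ to bring it into standard $q$-Beta form; as a backup, one can verify the semigroup identity first on the test functions $f(t)=(t-a)_q^\lambda$, where both sides compute explicitly from (2.8), and then extend to $L^p_q[a,b]$ by linearity and density.

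For part (b), both identities follow quickly from (a) and the definition $D^\alpha_{q,a+}=D^{[\alpha]}_q I^{[\alpha]-\alpha}_{q,a+}$. The first becomes
\[
(D^\alpha_{q,a+} I^\alpha_{q,a+} f)(x)=D^{[\alpha]}_q(I^{[\alpha]-\alpha}_{q,a+} I^\alpha_{q,a+} f)(x)=D^{[\alpha]}_q (I^{[\alpha]}_{q,a+} f)(x)=f(x),
\]
where the last equality uses (2.7) to identify $I^{[\alpha]}_{q,a+}$ with the $[\alpha]$-fold Jackson integral and then the relation $D_q\int_a^x g(t)\,d_qt=g(x)$ applied $[\alpha]$ times. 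The mixed identity is analogous:
\[
D^\beta_{q,a+} I^\alpha_{q,a+} f = D^{[\beta]}_q I^{[\beta]-\beta}_{q,a+} I^\alpha_{q,a+} f = D^{[\beta]}_q I^{[\beta]}_{q,a+} I^{\alpha-\beta}_{q,a+} f = I^{\alpha-\beta}_{q,a+} f,
\]
invoking (a) with the admissible exponents $[\beta]-\beta\geq 0$ and $\alpha-\beta>0$.

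The only real obstacle is the $q$-Beta evaluation underlying (a); all the remaining steps are routine bookkeeping with the definitions, (2.6), (2.7) and (2.5). Should the direct Jackson-sum evaluation prove awkward because $(x-qt)_q^{\alpha-1}$ is defined via an infinite product that does not telescope cleanly, I would fall back on the power-function/density route, for which (2.8) makes both sides of (a) immediate.
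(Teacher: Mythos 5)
The paper itself offers no proof here: part (a) is delegated to \cite[Theorem 5]{PMS2007} (with the remark that $p>1$ is ``completely similar'' to $p=1$) and part (b) to \cite[Lemma 9]{PMS2007}. Your overall architecture --- swap the order of integration via (2.6), reduce (a) to a $q$-Beta evaluation, and then obtain (b) from (a) together with Definition 2.2, the representation (2.7) and the fundamental theorem (2.5) --- is exactly the route taken in that reference, and your part (b) is correct modulo part (a) (with only the harmless edge case $[\beta]=\beta$, where $I^{0}_{q,a+}$ is the identity and (a) is not needed).

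The genuine gap is in your two proposed ways of evaluating the inner integral
$\int_{qs}^{x}(x-qt)_q^{\alpha-1}(t-qs)_q^{\beta-1}\,d_qt$. The substitution $t=qs+(x-qs)u$ is not available in $q$-calculus: the Jackson integral (2.3)--(2.4) samples the integrand on the geometric lattice $\{xq^{m}\}$, and an affine change of variable does not map a $q$-lattice to a $q$-lattice, so there is no change-of-variables formula of this type and the integral does not transform into the standard $q$-Beta integral $\int_0^1 u^{\beta-1}(1-qu)_q^{\alpha-1}d_qu$. The correct evaluation (as in \cite{PMS2007} and \cite{AM2012}) expands both $q$-power functions as Jackson sums and invokes the $q$-Vandermonde (Heine $q$-Gauss) summation formula; this is the actual content of the lemma and is missing from your argument. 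Your fallback is also incomplete: verifying the identity on the functions $(t-a)_q^{\lambda}$ via (2.8) is immediate, and Lemma 2.6 does give the continuity needed to pass to the limit, but you assert without justification that the span of these $q$-power functions is dense in $L^p_q[a,b]$. Since for $0<a<b$ this space is built from the Jackson integral (2.4), which lives on the countable set $\{aq^{m}\}\cup\{bq^{m}\}$ and is not even given by a positive measure, density is not a routine fact and would itself require a proof. Either supply the $q$-Vandermonde computation or prove the density claim; as written, (a) --- and hence the second identity in (b) --- is not established.
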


\begin{proof}
 a) The proof for the case $p=1$ can be found in
 \cite[Theorem 5]{PMS2007}. The proof for the case $p>1$ is completely similar so we leave out the details.

 b)This statement was proved in \cite[Lemma 9]{PMS2007}.
\end{proof}

\begin{definition}\label{definition 2.5}  A function  $f:[a,b]\rightarrow \mathbb{ R}$ is called $q$-absolutely continuous
if $\exists \varphi\in L^1_q[a,b]$ such that
\begin{eqnarray}\label{additive2.9}
f(x)=f(a)+\int\limits_a^x\varphi(t)d_qt,
\end{eqnarray}
for all $x\in[a,b]$.
\end{definition}

The collection of all $q$-absolutely continuous functions on $[a,b]$ is denoted  $AC_q[a,b]$. For $n \in N := {1,2,3,\dots}$ we denote by $AC^n_q[a, b]$ the space of real-valued functions $f(x)$ which have $q$-derivatives up to order $n-1$ on $[a, b]$ such
that $D_q^{n-1}f(x)\in AC_q[a,b]$:
$$
AC^n_q[a, b]:=\left\{f:[a,b]\rightarrow \mathbb{ R}; D_q^{n-1}f(x)\in AC_q[a,b] \right\}.
$$

\begin{lem}\label{lem 2.5}  a) A function $f:[a,b]\rightarrow \mathbb{ R}$ belongs to $AC^n_q[a, b]$ if the following equality holds
\begin{eqnarray}\label{additive2.10}
f(x) &=& \frac{1}{[n-1]_q!}\int\limits_a^x(x-qt)_a^{n-1}\varphi(t)d_t+\sum\limits_{k=0}^{n-1}c_k(x-a)_q^k,
\end{eqnarray}
where $\varphi(x):=D^{n}_qf(x)$ and $c_k=\frac{D^{k}_qf(a)}{\Gamma_q(k)}, k=0,1,2,\dots, n-1$, are constants.

b) Let $f(x)\in L_q^1[a,b]$ and $ \left(I^{n-\alpha}_{q,a+}f\right)(x) \in AC^n_q[a,b]$ with $n=[\alpha], \alpha>0$. Then the following equality holds:

$$
\left(I^\alpha_{q,a+}D_{q,a+}^\alpha f\right)(x)=f(x)-\sum\limits_{k=0}^{n-1}\frac{\left(D^{\alpha-k}_{q,a+}f\right)(a)}{\Gamma_q\left(\alpha-k+1\right)}(x-a)_q^{\alpha-k},
$$
while for $[a] = n \in \mathbb{N}$, it yields that
\begin{eqnarray*}
\left(I^n_{q,a+}D_{q,a+}^nf\right)(x)&=&f(x)-\sum\limits_{k=0}^{n-1}\frac{D^{\alpha-k}_qf(a)}{[k]_q!}(x-a)_q^k\\
&=&f(x)-\sum\limits_{k=0}^{n-1}\frac{b_k}{[k]_q!}(x-a)_q^k.
\end{eqnarray*}
\end{lem}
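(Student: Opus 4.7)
The plan is to prove part (a) by induction on $n$, using Definition \ref{definition 2.5} and the $q$-Fubini identity \eqref{additive2.6}, and then derive part (b) by applying (a) to the auxiliary function $g:=I_{q,a+}^{n-\alpha}f$.

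For part (a), read as an iff characterization, the base case $n=1$ is Definition \ref{definition 2.5} itself with $c_0=f(a)$ and $\varphi=D_q f$. For the inductive step, since $f\in AC_q^n[a,b]$ forces $D_q f\in AC_q^{n-1}[a,b]$, the induction hypothesis writes $D_q f(x)$ as $\tfrac{1}{[n-2]_q!}\int_a^x(x-qt)_q^{n-2}D_q^n f(t)\,d_qt$ plus a polynomial of degree at most $n-2$ with Taylor-type coefficients. I then $q$-integrate from $a$ to $x$, using \eqref{additive2.5} on the left, and collapse the resulting double $q$-integral via \eqref{additive2.6} together with the antiderivative $\int_{qt}^x(s-qt)_q^{n-2}\,d_qs=(x-qt)_q^{n-1}/[n-1]_q$ (a consequence of \eqref{additive2.1}); a shift of summation index then yields \eqref{additive2.10}. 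For the reverse implication, applying $D_q^{n-1}$ to \eqref{additive2.10} and invoking Lemma \ref{lem 2.3}(b) with integer orders exhibits $D_q^{n-1}f$ as an indefinite $q$-integral plus a constant, which by Definition \ref{definition 2.5} shows $f\in AC_q^n[a,b]$.

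For part (b), set $g:=I_{q,a+}^{n-\alpha}f\in AC_q^n[a,b]$. Part (a) gives
\begin{equation*}
g(x)=(I_{q,a+}^n D_q^n g)(x)+\sum_{k=0}^{n-1}\frac{D_q^k g(a)}{[k]_q!}(x-a)_q^k,
\end{equation*}
and Definition \ref{definition 2.2} combined with the semigroup $I_{q,a+}^n=I_{q,a+}^{n-\alpha}I_{q,a+}^\alpha$ from Lemma \ref{lem 2.3}(a) rewrites the first summand as $I_{q,a+}^{n-\alpha}(I_{q,a+}^\alpha D_{q,a+}^\alpha f)$. Applying $D_{q,a+}^{n-\alpha}$ to both sides and using $D_{q,a+}^{n-\alpha}I_{q,a+}^{n-\alpha}h=h$ from Lemma \ref{lem 2.3}(b) isolates
\begin{equation*}
f(x)=(I_{q,a+}^\alpha D_{q,a+}^\alpha f)(x)+\sum_{k=0}^{n-1}\frac{D_q^k g(a)}{[k]_q!}\,D_{q,a+}^{n-\alpha}\!\bigl((x-a)_q^k\bigr).
\end{equation*}
I compute $D_{q,a+}^{n-\alpha}((x-a)_q^k)$ by composing \eqref{additive2.8} with \eqref{additive2.1} and simplifying via \eqref{additive2.0} to obtain $\Gamma_q(k+1)(x-a)_q^{\alpha-n+k}/\Gamma_q(\alpha-n+k+1)$, and I identify $D_q^k g(a)=(D_{q,a+}^{\alpha-(n-k)}f)(a)$ straight from Definition \ref{definition 2.2} (using $\lceil\alpha-n+k\rceil=k$ since $n-1<\alpha\leq n$). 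A shift of summation index yields the stated formula; the integer case $\alpha=n$ is immediate from part (a) with $g=f$, $D_{q,a+}^\alpha=D_q^n$, and $D_q^k f(a)=b_k$.

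The main obstacle is the bookkeeping in part (b): applying $D_{q,a+}^{n-\alpha}$ term by term to the Taylor-type polynomial requires careful $q$-gamma manipulation via \eqref{additive2.0}, and identifying $D_q^k g(a)$ as a genuine fractional $q$-derivative of $f$ hinges on reading Definition \ref{definition 2.2} with the correct ceiling. Part (a) is routine once the $q$-Fubini reduction \eqref{additive2.6} and the antiderivative of $(s-qt)_q^{n-2}$ are in hand.
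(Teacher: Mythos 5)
Your proposal is essentially correct, but it is worth noting that the paper does not actually prove this lemma: part (a) is dismissed as following ``directly'' from Definition \ref{definition 2.5}, \eqref{additive2.9} and \eqref{additive2.5} (with a pointer to Theorem 20.2 of \cite{CK2000}), and part (b) is simply cited from Lemma 4.17 of \cite{AM2012}. So you have supplied a self-contained argument where the authors defer to the literature. Your induction for (a) via $q$-Fubini \eqref{additive2.6} and the antiderivative $\int_{qt}^x(s-qt)_q^{n-2}d_qs=(x-qt)_q^{n-1}/[n-1]_q$ is the standard way to obtain the $q$-Taylor formula with integral remainder, and your reduction of (b) to (a) through $g=I_{q,a+}^{n-\alpha}f$, followed by applying $D_{q,a+}^{n-\alpha}$ and identifying $D_q^kg(a)=(D_{q,a+}^{\alpha-n+k}f)(a)$, is exactly the mechanism behind the cited Lemma 4.17. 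One bookkeeping caveat: your substitution $j=n-k$ produces the sum $\sum_{j=1}^{n}\frac{(D_{q,a+}^{\alpha-j}f)(a)}{\Gamma_q(\alpha-j+1)}(x-a)_q^{\alpha-j}$, i.e.\ indices $1$ through $n$, not $0$ through $n-1$ as printed in the lemma; this matches the classical Riemann--Liouville identity and indicates an index typo in the paper's statement (as does the printed $c_k=D_q^kf(a)/\Gamma_q(k)$, which should be $\Gamma_q(k+1)=[k]_q!$, the normalization you in fact use). Your derivation is the correct one; just say explicitly that the formula you reach is the $j=1,\dots,n$ version rather than claiming it is literally ``the stated formula.''
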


\begin{proof} a) The proof follows directly from the definition of $AC^n_q[a, b]$, (\ref{additive2.9}) and (\ref{additive2.5}) (c.f. also \cite[Theorem 20.2]{CK2000}) so we leave out the details.

b) This statement was proved in \cite[ Lemma 4.17]{AM2012} when $a=0$, but the proof is the same for $a\neq0$.
\end{proof}

We also need the following result of independent interest:
\begin{lem}\label{lem 2.6}  Let $\alpha>0$ and $1\leq p<\infty$.  Then the fractional integration operator $I^\alpha_{q,a+} $ is bounded in
$L^p_q [a, b]$:
\begin{eqnarray}\label{additive2.7}
\|I^\alpha_{q,a+}f\|_{L^p_q [a, b]} &\leq& K\|f\|_{L^p_q [a, b]},
\end{eqnarray}
where $K:=\frac{(b-qa)_q^\alpha}{ \Gamma_q(\alpha+1)}$.
\end{lem}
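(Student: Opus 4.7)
The plan is to apply H\"older's inequality directly to the integral defining the fractional $q$-integral, viewing $(x-qt)_q^{\alpha-1}\,d_qt$ as a positive discrete measure on the $q$-grid in $[a,x]$, and then to interchange the order of $q$-integration via the Fubini-type identity (\ref{additive2.6}). I prefer this to the classical ``H\"older splitting of the kernel'' $(x-qt)_q^{\alpha-1}=(x-qt)_q^{(\alpha-1)/p'}(x-qt)_q^{(\alpha-1)/p}$, since in the $q$-setting the power function does not in general factor as $(u)_q^{\beta+\gamma}=(u)_q^\beta(u)_q^\gamma$, so the textbook kernel-splitting argument is not available here.

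Concretely, for $p>1$ with conjugate exponent $p'$, I would apply H\"older to the pair $(|f|,1)$ against the positive measure $(x-qt)_q^{\alpha-1}\,d_qt$ on $[a,x]$ to obtain
\begin{eqnarray*}
|(I^\alpha_{q,a+}f)(x)|^p\leq\frac{1}{\Gamma_q(\alpha)^p}\left(\int\limits_a^x(x-qt)_q^{\alpha-1}d_qt\right)^{p/p'}\int\limits_a^x(x-qt)_q^{\alpha-1}|f(t)|^p d_qt.
\end{eqnarray*}
By (\ref{additive2.2}) and (\ref{additive2.5}) the prefactor $q$-integral equals $(x-a)_q^\alpha/[\alpha]_q$, which I would bound by $(b-qa)_q^\alpha/[\alpha]_q$. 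Integrating in $x$ over $[a,b]$ and swapping the order of integration by (\ref{additive2.6}) produces the inner $q$-integral $\int_{qt}^b(x-qt)_q^{\alpha-1}d_qx=(b-qt)_q^\alpha/[\alpha]_q$, evaluated via (\ref{additive2.1}) and (\ref{additive2.5}). A further bound $(b-qt)_q^\alpha\leq(b-qa)_q^\alpha$, combined with (\ref{additive2.0}) and the algebraic identity $p/p'+1=p$, collapses everything into $[(b-qa)_q^\alpha/\Gamma_q(\alpha+1)]^p\|f\|_{L^p_q[a,b]}^p$, and taking $p$-th roots yields the claimed bound with $K$ as stated.

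The case $p=1$ is handled directly, without H\"older, by performing only the Fubini swap and the second closed-form $q$-integral. The one technical point worth verifying is the monotonicity used in the two pointwise bounds: that $x\mapsto(x-a)_q^\alpha$ is nondecreasing on $[a,b]$ and $u\mapsto(b-u)_q^\alpha$ is nonincreasing on $[qa,b]$. Both facts reduce to the elementary observation that the single factor $w\mapsto(1-q^k w)/(1-q^{k+\alpha}w)$ is decreasing in $w$ (since $q^\alpha<1$), applied termwise to the infinite product representation of the $q$-power. I expect this monotonicity check to be the only non-routine ingredient; the rest is a mechanical assembly of H\"older, (\ref{additive2.6}), and the two closed-form $q$-integrals above.
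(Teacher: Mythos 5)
Your proposal is correct and follows essentially the same route as the paper's proof: H\"older applied to $(|f|,1)$ against the measure $(x-qt)_q^{\alpha-1}d_qt$, the closed-form evaluation of $\int_a^x(x-qt)_q^{\alpha-1}d_qt$ and $\int_{qt}^b(x-qt)_q^{\alpha-1}d_qx$ via (\ref{additive2.1})--(\ref{additive2.2}) and (\ref{additive2.5}), the Fubini swap (\ref{additive2.6}), and the final bounds by $(b-qa)_q^\alpha$. Your explicit factorwise verification of the monotonicity of $(x-a)_q^\alpha$ and $(b-u)_q^\alpha$ is a welcome addition, since the paper uses these inequalities without justification.
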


\begin{proof} For $p>1$,  $p'$ is as usual defined by $\frac{1}{p}+\frac{1}{p'}=1$.  Then, using the H\"{o}lder-Rogers inequality,
Definition 2.1  and (\ref{additive2.1}),  (\ref{additive2.2}), (\ref{additive2.5}) and (\ref{additive2.6}),  we obtain that
\begin{eqnarray*}
\|I^\alpha_{q,a+}f\|_{L^p_q [a, b]}
&\leq& \frac{\left(\int\limits_a^b
\left[\int\limits_a^x(x-qt)_q^{\alpha-1} \left|f(t)\right| d_qt\right]^pd_qx\right)^\frac{1}{p}}{\Gamma_q(\alpha)} \\
&\leq&\frac{ \left(\int\limits_a^b \left[\int\limits_a^x(x-qt)_q^{\alpha-1}d_qt\right]^\frac{p}{p'}
\left[\int\limits_a^x(x-qt)_q^{\alpha-1}\left|f(t)\right|^pd_qt\right] d_qx\right)^\frac{1}{p}}{\Gamma_q(\alpha)}
 \end{eqnarray*}
 \begin{eqnarray*}
&\leq&\frac{ \left(\int\limits_a^b \left[\frac{\int\limits_a^xD_{q,t}\left[(x-qt)_q^\alpha\right] d_qt}{-[\alpha]_q}\right]^\frac{p}{p'}
\left[\int\limits_a^x(x-qt)_q^{\alpha-1}\left|f(t)\right|^pd_qt\right] d_qx\right)^\frac{1}{p}}{\Gamma_q(\alpha)} \\
&\leq&\frac{ \left[\frac{(b-a)_q^\alpha}{[\alpha]_q}\right]^\frac{1}{p'} \left(\int\limits_a^b\left|f(t)\right|^p
 \int\limits_{qt}^b(x-qt)_q^{\alpha-1}d_qxd_qt\right)^\frac{1}{p}}{\Gamma_q(\alpha)} \\
 \end{eqnarray*}
 \begin{eqnarray*}
 &\leq&\frac{ \left[\frac{(b-a)_q^\alpha}{[\alpha]_q}\right]^\frac{1}{p'}}{\Gamma_q(\alpha)}
 \left(\frac{\int\limits_a^b\left|f(t)\right|^p
 \int\limits_{qt}^bD_{q,x}\left[(x-qt)_q^\alpha\right]d_qxd_qt}{[\alpha]_q}\right)^\frac{1}{p}\\
 &\leq&\frac{\left[\frac{(b-qa)_q^\alpha}{[\alpha]_q}\right]^\frac{1}{p'}\left[\frac{(b-qa)_q^\alpha}{[\alpha]_q}\right]^\frac{1}{p}}{\Gamma_q(\alpha)}
  \|f(t)\|_{L^p_q [a, b]}\\
 &\leq& K \|f(t)\|_{L^p_q [a, b]},
\end{eqnarray*}
where $(b-a)_q^\alpha\leq (b-qa)_q^\alpha$  for $\alpha>0$. The proof is complete.
\end{proof}

%%%%%%%%%%%%%%%%%%%%%%%%%%%%%%%%%%%%%%%%%%%%%%%%%%%%%%%%%%%%%%%
%%%%%%%%%%%%%%%%%%%%%%%%%%%%%%%%%%%%%%%%%%%%%%%%%%%%%%%%%%%%%%%

{\section{On the solutions of some fractional $q$-differential equations with the Riemann-Liouville fractional
$q$-derivative.}}
%%%%%%%%%%%%%%%%%%%%%%%%%%%%%%%%%%%%%%%%%%%%%%%%%%%%%%%%%%%%%%%
%%%%%%%%%%%%%%%%%%%%%%%%%%%%%%%%%%%%%%%%%%%%%%%%%%%%%%%%%%%%%%%

In this  section we will consider the   nonlinear model with Riemann-Liouville fractional
$q$-derivative:

\begin{eqnarray}\label{additive3.1A}
\left(D^\alpha_{q,a+}y\right)(x) &=& f\left(x,y(x)\right),\;\;\;n-1<\alpha\leq n; n\in\mathbb{N},
\end{eqnarray}
\begin{eqnarray}\label{additive3.2A}
\left(D^{\alpha-k}_{q,a+}y \right)(a+)=b_k, b_k\in\mathbb{R}, k=0,1,2, \dots,  n-1.
\end{eqnarray}

In the classical case the investigations in this direction involve the existence and uniqueness of solutions to fractional differential equations with the Riemann-Liouville fractional derivative. Several authors have considered such problems even in nonlinear cases, see e.g. \cite[Section 3]{KST2006} and the references therein. Here we use another approach and first prove an equivalence theorem of independent interest.
\vspace{0,3cm}

\subsection{ Equivalence of the Cauchy type $q$-fractional problem and a $q$-Volterra
integral equation.}

\begin{thm}\label{thm 3.1}  Let $n-1<\alpha\leq n; n\in\mathbb{N}$, $G$ be an open set in $\mathbb{R}$ and
$f(.,.):  ( a , b] \times G\rightarrow \mathbb{R}$ be a function such that $f(x,y(x)) \in L_q^1(a, b)$ for any $y \in G$.
If $y(x)\in L_q^1(a, b)$, then $y(t)$ satisfies a.e. the relations (\ref{additive3.1A})-(\ref{additive3.2A}) if and only if $y(x)$ satisfies a.e. the integral equation
\begin{eqnarray}\label{additive3.1}
y(x):=\sum\limits_{k=0}^{n-1}\frac{b_k}{\Gamma_q\left(\alpha-k+1\right)}(x-a)_q^{\alpha-k}+
\left[I^\alpha_{q,a+}f(t,y(t))\right](x).
\end{eqnarray}
\end{thm}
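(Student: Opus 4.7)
The plan is to establish both directions of the equivalence using the inversion relations already collected in the Preliminaries: Lemma \ref{lem 2.3} b) (the semigroup/inversion identities for $I^\alpha_{q,a+}$ and $D^\alpha_{q,a+}$) and Lemma \ref{lem 2.5} b) (the identity recovering $y$ from $I^\alpha_{q,a+}D^\alpha_{q,a+}y$ up to boundary terms).

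For the necessity direction, assume that $y$ satisfies (\ref{additive3.1A})-(\ref{additive3.2A}) and apply $I^\alpha_{q,a+}$ to both sides of (\ref{additive3.1A}). The right-hand side becomes $\bigl(I^\alpha_{q,a+}f(t,y(t))\bigr)(x)$, while by Lemma \ref{lem 2.5} b) the left-hand side becomes
$$y(x) - \sum_{k=0}^{n-1}\frac{(D^{\alpha-k}_{q,a+}y)(a+)}{\Gamma_q(\alpha-k+1)}(x-a)_q^{\alpha-k}.$$
Inserting the initial values $(D^{\alpha-k}_{q,a+}y)(a+) = b_k$ from (\ref{additive3.2A}) and rearranging immediately yields (\ref{additive3.1}). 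The hypothesis $y, f(\cdot,y(\cdot))\in L^1_q(a,b)$ together with the existence of the limits in (\ref{additive3.2A}) is used to ensure that $I^{n-\alpha}_{q,a+}y\in AC^n_q[a,b]$, so that Lemma \ref{lem 2.5} b) is applicable.

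For sufficiency, assume that (\ref{additive3.1}) holds and apply $D^\alpha_{q,a+}$ termwise. By Lemma \ref{lem 2.3} b) the integral term produces $f(x,y(x))$. For each monomial I would compute $D^\alpha_{q,a+}(x-a)_q^{\alpha-k} = D^n_q I^{n-\alpha}_{q,a+}(x-a)_q^{\alpha-k}$, first evaluating the integral by (\ref{additive2.8}) and then iterating (\ref{additive2.1}) for $D^n_q$, so that the sum contributes nothing, and (\ref{additive3.1A}) follows. To recover the initial conditions (\ref{additive3.2A}), I would apply $D^{\alpha-j}_{q,a+}$ to both sides of (\ref{additive3.1}) for each $j = 0, 1, \dots, n-1$ and pass to the limit $x\to a+$: Lemma \ref{lem 2.3} b) reduces the integral term to $\bigl(I^j_{q,a+}f(\cdot,y(\cdot))\bigr)(x)$, which tends to $0$, and the monomial $(x-a)_q^{\alpha-k}$ yields a multiple of $(x-a)_q^{j-k}/\Gamma_q(j-k+1)$ whose coefficient vanishes for $j<k$ (because $1/\Gamma_q$ vanishes at non-positive integers) and whose factor $(x-a)_q^{j-k}$ vanishes at $a+$ for $j>k$; only the diagonal $j=k$ survives and reproduces $b_j$ exactly.

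The main obstacle is the careful bookkeeping in the fractional $q$-derivatives of the power functions $(x-a)_q^{\alpha-k}$: one must verify that the combinatorics of applying $D^\alpha_{q,a+}$ and $D^{\alpha-j}_{q,a+}$ to these monomials produces precisely the cancellations and boundary values that mirror the classical Riemann--Liouville computation. A secondary concern is to propagate the integrability of $y$ and $f(\cdot,y(\cdot))$ through each manipulation so that all operators act on their proper domains and the exchanges of $D^\alpha_{q,a+}$, $I^\alpha_{q,a+}$ with finite sums and limits are legitimate.
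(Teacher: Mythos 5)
Your proposal is correct and follows essentially the same route as the paper's own proof: necessity via Lemma \ref{lem 2.5} b) after noting that $D^\alpha_{q,a+}y = f(\cdot,y(\cdot))\in L^1_q$ forces $I^{n-\alpha}_{q,a+}y\in AC^n_q[a,b]$, and sufficiency by applying $D^\alpha_{q,a+}$ and then $D^{\alpha-m}_{q,a+}$ to (\ref{additive3.1}), computing their action on the monomials $(x-a)_q^{\alpha-k}$ via (\ref{additive2.8}) and (\ref{additive2.1}) and invoking Lemma \ref{lem 2.3} b) for the integral term. Your explicit diagonal analysis ($j<k$, $j=k$, $j>k$) when recovering the initial conditions is a slightly more careful rendering of the limit step the paper passes over quickly, but the argument is the same.
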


\begin{proof} \emph{Necessity}. Let $n-1<\alpha\leq n; n\in\mathbb{N}$ and $y(x)\in L_q^1(a, b)$ satisfy a.e. the relations (\ref{additive3.1A})-(\ref{additive3.2A}). Since
$f(t,y) \in L_q^1(a, b)$ and  (\ref{additive3.1}) we find that  $\exists \left(D^\alpha_{q,a+}y\right)(x)\in L_q^1(a, b)$.
Then, by using Definition \ref{definition 2.2} we have that $\left(D^\alpha_{q,a+}y\right)(x) =D_q^n\left[I^{n-\alpha}_{q,a+}y\right](x)$ and
$$
\int\limits_a^xD_q^n\left[I^{n-\alpha}_{q,a+}y\right](x)d_qx=D_q^{n-1}\left[I^{n-\alpha}_{q,a+}y\right](x)-
D_q^{n-1}\left[I^{n-\alpha}_{q,a+}y\right](a).
$$

Hence, according to Lemma \ref{lem 2.5} (a), $\left(I^{n-\alpha}_{q,a+}y\right)(x)\in AC^n_q[a,b]$. Thus, we can apply  Lemma \ref{lem 2.5} (b) and (\ref{additive3.2A}) to conclude that
\begin{eqnarray}\label{additive3.2}
\left(I^\alpha_{q,a+}D_q^\alpha y\right)(x)&=&y(x)-\sum\limits_{k=0}^{n-1}\frac{\left(D^{\alpha-k}_{q,a+}y\right)(a)}{\Gamma_q\left(\alpha-k+1\right)}(x-a)_q^{\alpha-k}\nonumber\\
&=&y(x)-\sum\limits_{k=0}^{n-1}\frac{b_k}{\Gamma_q\left(\alpha-k+1\right)}(x-a)_q^{\alpha-k}.
\end{eqnarray}

Moreover, by Lemma \ref{lem 2.6}, the integral $\left[I^\alpha_{q,a+}f(.,.)\right](x)\in L_q^1[a,b]$. Finally, by
applying the operator $I^\alpha_{q,a+}$ to both sides of (\ref{additive3.1A}) and using (\ref{additive3.2}) and (\ref{additive2.5}),
we obtain the equation (\ref{additive3.1}), and hence the necessity is proved.

\emph{Sufficiency.}  Let $y(x) \in L_q^1(a, b)$ and assume that it satisfies the equation (\ref{additive3.1}). Then, by applying the operator $D_{q,a+}^\alpha$ to both sides of (\ref{additive3.1}), we have that
\begin{eqnarray}\label{additive3.3}
\left(D_{q,a+}^\alpha y\right)(x)&=&\sum\limits_{k=0}^{n-1}\frac{b_k}{\Gamma_q\left(\alpha-k+1\right)}
\left(D_{q,a+}^\alpha(t-a)_q^{\alpha-k}\right)(x)\nonumber\\
&+&\left[D_{q,a+}^\alpha I^\alpha_{q,a+} f(t,y(t))\right](x).
\end{eqnarray}

From (\ref{additive2.8}) it follows that $\left(I^{n-\alpha}_{q,a+}(t-a)_q^{\alpha-k}\right)(x)=
\frac{\Gamma_q\left(\alpha-k+1\right)}{\Gamma_q\left(n-k+1\right)}(t-a)_q^{n-k}$. Furthermore, it yields that
\begin{eqnarray}\label{additive3.4}
\frac{\left(D_{q,a+}^\alpha(t-a)_q^{\alpha-k}\right)(x)}{\Gamma_q\left(\alpha-k+1\right)} &=& \frac{\left(D_q^n I^{n-\alpha}_{q,a+}(t-a)_q^{\alpha-k}\right)(x)}{\Gamma_q\left(\alpha-k+1\right)}\nonumber\\
&=&\frac{\left(D_q^n  (t-a)_q^{n-k}\right)(x)}{\Gamma_q\left(n-k+1\right)}=0.
\end{eqnarray}

Consequently, combining (\ref{additive3.3}) and (\ref{additive3.4}) and by using Lemma \ref{lem 2.3} (b)
we arrive at the equation (\ref{additive3.1A}).

Now we show that the relations in (\ref{additive3.2A}) also hold. By Definition \ref{definition 2.2} and (\ref{additive2.1}) and (\ref{additive2.8}) we get that
\begin{eqnarray}\label{additive3.5}
\left[D_{q,a+}^{\alpha-m }(t-a)_q^{\alpha-k}\right](x)&=&\left[D_q^{n-m }I^{n -\alpha}_{q,a+}(t-a)_q^{\alpha-k}\right](x)\nonumber\\
&=&\frac{\Gamma_q\left(\alpha-k+1\right)}{\Gamma_q\left(n -k+1\right)}\left[D_q^{n-m }(t-a)_q^{n -k}\right](x)\nonumber\\
&=&\frac{\Gamma_q\left(\alpha-k+1\right)}{\Gamma_q\left(n -k+1\right)}\left[D_q^{n-m }(t-a)_q^{n -k}\right](x)\nonumber\\
&=&\frac{\Gamma_q\left(\alpha-k+1\right)}{\Gamma_q\left(m-k+1\right)} (x-a)_q^{m -k}.
\end{eqnarray}

Next we apply the operators $D_{q,a+}^{\alpha-m}$ with $m = 1,\dots , n-1)$ to both sides of (\ref{additive3.1})  and Lemma \ref{lem 2.3} (b) and
(\ref{additive3.5}) to conclude that
\begin{eqnarray*}
\left(D_{q,a+}^{\alpha-m }y\right)(x)&=&\sum\limits_{k=0}^{n-1}\frac{b_k\left[D_{q,a+}^{\alpha-m }(t-a)_q^{\alpha-k}\right](x)}
{\Gamma_q\left(\alpha-m+1\right)}
+ \left[D_{q,a+}^{\alpha-m }I^\alpha_{q,a+}f(t,y(t))\right](x)\nonumber\\
&=&\sum\limits_{k=0}^{n-1}\frac{b_k}{\Gamma_q\left(m-k+1\right)} (x-a)_q^{m-k} +
 \left[I^m_{q,a+}f(t,y(t))\right](x).
\end{eqnarray*}

Finally, by taking the limit of (\ref{additive3.1}) when $x\rightarrow a+$,  we obtain the relations in
(\ref{additive3.2A}). The proof is complete.

\end{proof}

\subsection{  Existence and uniqueness of a unique global solution to the Cauchy type $q$-fractional problem (\ref{additive3.1A})-(\ref{additive3.2A}) in $L^1_{\alpha, q}[a,b]$}.

\vspace{0,3cm}
In this subsection we give conditions for a unique global solution to the Cauchy type
problem (\ref{additive3.1A})-(\ref{additive3.2A}) in the space $L^1_{\alpha, q}[a,b]$ defined for $\alpha>0$ by
\begin{eqnarray*}
L^1_{\alpha, q}[a,b]:=\left\{y\in L_q^1[a, b]: D_{q,a+}^\alpha\in L_q^1[a, b]\right\}.
\end{eqnarray*}

The proof of the following existence and uniqueness theorem depends heavily on Theorem \ref{thm 3.1} and the
Banach fixed point Theorem (see e.g. \cite{KF1968}).

\begin{thm}\label{thm 3.2}  Let $n-1<\alpha\leq n; n\in\mathbb{N}$ and  $G\subset \mathbb{R}$ be an open set  and  $f(.,.) :
[a,b]\times G \rightarrow \mathbb{R}$ be a function such that $f(x,y(x))\in L_q^1[a,b]$ for any $y\in G$ and
satisfies a Lipschitz condition  in the following form:
\begin{eqnarray}\label{additive3.6}
\left|f(x,y_1(x))-f(x,y_2(x))\right|\leq C\left|y_1(x)-y_2(x)\right|,
\end{eqnarray}
where $C>0$. Then there exists a unique solution $y(x)\in L_q^1[a,b]$ to the Cauchy type problem (\ref{additive3.1A})-(\ref{additive3.2A}).
\end{thm}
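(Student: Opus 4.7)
The strategy is to convert the Cauchy problem into the equivalent Volterra $q$-integral equation and then apply the Banach fixed point theorem. Concretely, by Theorem \ref{thm 3.1}, a function $y\in L_q^1[a,b]$ solves (\ref{additive3.1A})--(\ref{additive3.2A}) if and only if it solves
$$
y(x)=\sum_{k=0}^{n-1}\frac{b_k}{\Gamma_q(\alpha-k+1)}(x-a)_q^{\alpha-k}+\bigl[I^\alpha_{q,a+}f(t,y(t))\bigr](x).
$$
So it suffices to show that the operator
$$
(Ty)(x):=\sum_{k=0}^{n-1}\frac{b_k}{\Gamma_q(\alpha-k+1)}(x-a)_q^{\alpha-k}+\bigl[I^\alpha_{q,a+}f(t,y(t))\bigr](x)
$$
has a unique fixed point in $L_q^1[a,b]$.

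First I would verify that $T$ maps $L_q^1[a,b]$ into itself. The polynomial-type terms $(x-a)_q^{\alpha-k}$ are bounded on $[a,b]$, hence belong to $L_q^1[a,b]$. For the integral part, write $f(x,y(x))=f(x,y(x))-f(x,0)+f(x,0)$; the Lipschitz condition (\ref{additive3.6}) together with the hypothesis that $f(\cdot,y(\cdot))\in L_q^1$ (taken for the reference function $y\equiv 0$, say, shifted suitably to lie in $G$) shows that $f(\cdot,y(\cdot))\in L_q^1[a,b]$ for every $y\in L_q^1[a,b]$. Lemma \ref{lem 2.6} then guarantees that $I^\alpha_{q,a+}f(\cdot,y(\cdot))\in L_q^1[a,b]$, so $T:L_q^1[a,b]\to L_q^1[a,b]$ is well defined.

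Next I would estimate the Lipschitz constant of $T$. Using (\ref{additive3.6}) pointwise and then Lemma \ref{lem 2.6} with $p=1$ gives
$$
\|Ty_1-Ty_2\|_{L_q^1[a,b]}\le \bigl\|I^\alpha_{q,a+}\bigl(f(\cdot,y_1)-f(\cdot,y_2)\bigr)\bigr\|_{L_q^1[a,b]}\le \frac{C(b-qa)_q^\alpha}{\Gamma_q(\alpha+1)}\,\|y_1-y_2\|_{L_q^1[a,b]}.
$$
If $C(b-qa)_q^\alpha/\Gamma_q(\alpha+1)<1$ we are done by Banach's theorem. The main obstacle is that in general this constant need not be less than one, so this only yields a local solution on a sufficiently short subinterval $[a,x_1]\subset[a,b]$ with $C(x_1-qa)_q^\alpha/\Gamma_q(\alpha+1)<1$.

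To upgrade to a global solution I would either iterate $T$ or split $[a,b]$. The cleanest route is splitting: choose $a=x_0<x_1<\cdots<x_N=b$ with each subinterval short enough that the contraction constant on $[x_j,x_{j+1}]$ is strictly less than $1$. On $[x_0,x_1]$ the argument above produces a unique $y\in L_q^1[x_0,x_1]$ solving the Volterra equation. On $[x_1,x_2]$ one repeats the fixed-point argument for the analogous integral equation whose ``initial data'' are the values of $y$ and its fractional $q$-derivatives at $x_1$ (which are determined by the solution already constructed on $[x_0,x_1]$ via Lemma \ref{lem 2.3} and Lemma \ref{lem 2.5}); the integral splits as
$$
I^\alpha_{q,a+}f(\cdot,y(\cdot))\bigr|_{[x_1,x_2]}=\text{(known contribution from }[a,x_1])+I^\alpha_{q,x_1+}f(\cdot,y(\cdot)),
$$
so exactly the same contraction argument applies on $[x_1,x_2]$ with constant $C(x_2-qx_1)_q^\alpha/\Gamma_q(\alpha+1)<1$. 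Concatenating the pieces yields a unique $y\in L_q^1[a,b]$. Uniqueness on each subinterval is automatic from Banach; uniqueness on the whole interval follows since any two global solutions agree on $[x_0,x_1]$ and therefore on each subsequent $[x_j,x_{j+1}]$. Finally, Theorem \ref{thm 3.1} translates this fixed point back into the unique solution of (\ref{additive3.1A})--(\ref{additive3.2A}).
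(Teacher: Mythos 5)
Your proposal is correct and follows essentially the same route as the paper: reduce to the Volterra $q$-integral equation via Theorem \ref{thm 3.1}, write it as a fixed-point equation for $F$, use Lemma \ref{lem 2.6} to get the contraction estimate on a sufficiently short subinterval, and invoke the Banach fixed point theorem. You are in fact slightly more complete than the paper's own proof of Theorem \ref{thm 3.2}, which stops at the local interval $[a,\xi]$ and simply asserts the conclusion on $[a,b]$; your interval-splitting continuation is exactly the globalization step the authors carry out only later, in the proof of Theorem \ref{thm 4.2}.
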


\begin{proof} According to Theorem \ref{thm 3.1}, it is sufficient to study the existence of a unique solution
$y(x) \in L[a,b]$ to the $q$-integral equation (\ref{additive3.1}).
Consequently, the equation (\ref{additive3.1}) can be written in the operator form $y=Fy$ such that
\begin{eqnarray}\label{additive3.7}
\left(Fy\right)(x):=y_0+\left[I^\alpha_{q,a+}f(t,y(t))\right](x),
\end{eqnarray}
where $y_0:=\sum\limits_{k=0}^{n-1}\frac{b_k}{\Gamma_q\left(\alpha-k+1\right)}(x-a)_q^{\alpha-k}$.

Let $[a,\xi] \subset [a,b]$  be such that it holds that
\begin{eqnarray}\label{additive3.8}
\omega:=C\frac{\left(a-q\xi\right)_q^\alpha}{\Gamma_q\left(\alpha+1\right)}\leq1.
\end{eqnarray}

From (\ref{additive3.7}), (\ref{additive3.8}) and Lemma \ref{lem 2.6}  it follows that
\begin{eqnarray*}
\| Fy_1-Fy_2\|_{L_q^1[a,\xi]}\leq C\|I^\alpha_{q,a+}\left(y_1-y_2\right)\|_{L_q^1[a,\xi]}\leq \omega\| y_1-y_2 \|_{L_q^1[a,\xi]}.
\end{eqnarray*}

Hence, according to the Banach fixed point theorem (see e.g.\cite{KF1968}), there exists a unique solution $y'\in L_q^1[a,\xi]$ such that $Ty'=y'$.

Moreover, in view of this theorem, the solution $y'$ is obtained as a limit of a convergent sequence  $\left\{\left(F^my'_0\right)(x)\right\}$ in the space $L_q^1[a,\xi]$, i.e. that
\begin{eqnarray*}
\lim\limits_{m\rightarrow0}\|F^my'_0-y'\|_{L_q^1[a,\xi]}=0,
\end{eqnarray*}
where $ y_0$ is any function in $L_q^1[a,\xi]$.

If at least one $b_k\neq0$ in the initial condition
(\ref{additive3.2A}), then we can take $y'_0=y_0$.  By (\ref{additive3.8}), the sequence $\left\{\left(F^my'_0\right)(x)\right\}$ is defined by
$$
\left(F^my'_0\right)(x):=y_0+\left[I^\alpha_{q,a+}f(t,F^{m-1}y'_0(t))\right](x),
$$
for $m\in\mathbb{N}$. Let $\left(y_m(x):=F^my'_0\right)(x)$. Then
$y_m(x)=y_0+\left[I^\alpha_{q,a+}f(t,y_{m-1} (t))\right](x)$ and
$$
\lim\limits_{m\rightarrow0}\|y_m-y'\|_{L_q^1[a,\xi]}=0.
$$
This means that we actually applied the method of successive approximations to
find a unique solution $y'(x)$ to the integral equation (\ref{additive3.1}). Thus,
there exists a unique solution $y(x) = y'(x) \in L_q^1[a,b]$ to the   equation (\ref{additive3.3}) and hence to the Cauchy type problem
(\ref{additive3.1A})-(\ref{additive3.2A}).
 This fact completes the proof of Theorem 3.2.

\end{proof}

\section{ On the solutions of some fractional $q$-differential equations with the
composite fractional $q$-derivative}

In this section we define the  $q$-analogue of the composite fractional operator or Hilfer  derivative operator (see \cite{H2000}, \cite{H2002}).
Moreover, the existence and uniqueness theorems for  nonlinear fractional $q$-differential equations with Hilfer fractional
$q$-derivatives types will be proved, which are the $q$-extensions of the main results given in \cite[Proposition 1, Proposition 2 and Theorem 1]{T2012} (see also \cite[Proposition 3.1, Proposition 3.2 and Theorem 3.1]{TZ2019}).

\begin{definition}\label{definition4.1} We define the Hilfer fractional  $q$-derivative   $\mathcal{D}_{q,a+}^{\alpha,\beta} f$
  of order $0 < \alpha< 1$ and type $0 \leq \beta \leq 1$ with respect to $x$ by
\begin{eqnarray}\label{additive4.1}
\left(\mathcal{D}^{\alpha,\beta}_{q,a+}f\right)(x) &:=& \left(I_{q,a+}^{\beta(1-\alpha)}D_q\left(I_{q,a+}^{(1-\beta)(1-\alpha)}f\right)\right)(x).
\end{eqnarray}
\end{definition}

Note that in case when $\beta=0$ the  generalized  fractional $q$-derivative   (\ref{additive4.1}) would correspond to the the Riemann-Liouville fractional $q$-derivative in Definition \ref{definition 2.2} and in case when $\beta= 1$ it corresponds to the Caputo fractional $q$-derivative $\left(_cD^\alpha _{q,a+}f\right)(x)$ defined by (see \cite{PMS2009}):
$$
\left(_cD^\alpha _{q,a+}f\right)(x) := \left(I_{q,a+}^{1-\alpha}D_qf\right)(x)=\frac{1}{\Gamma_q(1-\alpha)}\int\limits_a^x
\left(x-qt\right)_q^{-\alpha}f(t)d_qt.
$$

\begin{definition}\label{definition4.2}  Let $n-1<\alpha \leq n, n\in \mathbb{N}$ and $0\leq \beta\leq 1$. We define the generalized  fractional $q$-derivative   $D_{q,a+}^{\alpha,\beta} f$  as follows:
\begin{eqnarray}\label{additive4.2}
\left(D^{\alpha,\beta}_{q,a+}f\right)(x) &:=& \left(I_{q,a+}^{\beta(n-\alpha)}D^n_q\left(I_{q,a+}^{(1-\beta)(n-\alpha)}f\right)\right)(x)\nonumber\\
&=&\left(I_{q,a+}^{\beta(n-\alpha)}D_{q,a+}^{\alpha+\beta n-\alpha\beta}f\right)(x).
\end{eqnarray}
\end{definition}

For the proof of our main results in this section we also need two lemmas of independent interest.
\begin{lem}\label{lem4.3} Let $\alpha > 0$, $n=[\alpha]$ and $f\in AC^n_q[a,b]$. Then
\begin{eqnarray}\label{additive4.3B}
D^\alpha_{q,a+}f &=&\sum\limits_{k=0}^{n-1}\frac{\lim\limits_{x\rightarrow a}D_q^kf(x)}
{\Gamma_q\left(k-\alpha+1\right)}\left(x-a\right)_q^{k}+
\int\limits_a^k\frac{\left(x-qt\right)_q^{n-\alpha-1}D_q^kf(t)}{\Gamma_q(n-\alpha)}d_qt
\end{eqnarray}
for all $x\in[a,b]$.
\end{lem}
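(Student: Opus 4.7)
The plan is to prove the formula by unpacking the definition of the Riemann–Liouville $q$-derivative (Definition \ref{definition 2.2}) and combining it with the $q$-Taylor expansion that follows from the characterization of $AC^n_q[a,b]$ in Lemma \ref{lem 2.5}(a). In other words, I will write $D^\alpha_{q,a+}f = D_q^n \, I^{n-\alpha}_{q,a+}f$, expand $f$ explicitly, then compute how $I^{n-\alpha}_{q,a+}$ and $D_q^n$ act on each piece.

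First I would invoke Lemma \ref{lem 2.5}(a) applied to $f \in AC^n_q[a,b]$ with $\varphi = D_q^n f$, obtaining a $q$-Taylor formula of the form
\begin{equation*}
f(x) = \sum_{k=0}^{n-1} \frac{D_q^k f(a)}{[k]_q!}\,(x-a)_q^{k} + \bigl(I^n_{q,a+} D_q^n f\bigr)(x).
\end{equation*}
Applying $I^{n-\alpha}_{q,a+}$ termwise, using the power rule (\ref{additive2.8}) on each $(t-a)_q^k$ and the semigroup property Lemma \ref{lem 2.3}(a) on the remainder, yields
\begin{equation*}
\bigl(I^{n-\alpha}_{q,a+} f\bigr)(x) = \sum_{k=0}^{n-1} \frac{D_q^k f(a)\,\Gamma_q(k+1)}{[k]_q!\,\Gamma_q(n-\alpha+k+1)}(x-a)_q^{n-\alpha+k} + \bigl(I^{2n-\alpha}_{q,a+} D_q^n f\bigr)(x).
\end{equation*}

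Next I would apply $D_q^n$ to both sides. For the polynomial-like terms I would iterate identity (\ref{additive2.1}), which gives $D_q^n (x-a)_q^{n-\alpha+k} = \frac{\Gamma_q(n-\alpha+k+1)}{\Gamma_q(k-\alpha+1)}(x-a)_q^{k-\alpha}$; after simplification with $\Gamma_q(k+1)=[k]_q!$ the $k$-th coefficient collapses to $\frac{D_q^k f(a)}{\Gamma_q(k-\alpha+1)}$. For the remainder term I would write $I^{2n-\alpha}_{q,a+} = I^n_{q,a+}\,I^{n-\alpha}_{q,a+}$ (Lemma \ref{lem 2.3}(a)) so that Lemma \ref{lem 2.3}(b) with $\alpha=\beta=n$ gives $D_q^n I^n_{q,a+} = \mathrm{Id}$, leaving $I^{n-\alpha}_{q,a+} D_q^n f$, which by Definition \ref{definition2.1} is exactly the integral on the right-hand side of (\ref{additive4.3B}). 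Passing to the limit $x\to a+$ in $D_q^k f$ (legitimate because $f\in AC^n_q[a,b]$) produces the boundary values written in the statement.

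The main obstacle I foresee is bookkeeping rather than any genuinely new idea: one must verify the applicability of Lemma \ref{lem 2.3}(b) to $D_q^n f$ (which belongs to $L^1_q[a,b]$ because of the $AC^n_q$ hypothesis) and confirm that the repeated application of (\ref{additive2.1}) produces the correct telescoping product $[n-\alpha+k]_q[n-\alpha+k-1]_q\cdots[k-\alpha+1]_q$ whose ratio of $q$-Gamma values is used above. I would also note that the integrand in the statement should read $D_q^n f(t)$ (rather than $D_q^k f(t)$) and that the polynomial factor is $(x-a)_q^{k-\alpha}$, as is forced by dimensional counting in the computation; with these readings the proof closes cleanly.
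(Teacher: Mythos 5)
Your proposal is correct and follows essentially the same route as the paper: expand $f$ via the $q$-Taylor formula of Lemma \ref{lem 2.5}(a), compute $D^\alpha_{q,a+}$ on each power $(x-a)_q^k$ by unpacking Definition \ref{definition 2.2} with (\ref{additive2.8}) and (\ref{additive2.1}), and reduce the remainder to $(I^{n-\alpha}_{q,a+}D_q^n f)(x)$ (the paper cites Lemma \ref{lem 2.3}(b) directly where you re-derive it via the semigroup property, an immaterial difference). Your corrections to the statement --- integrand $D_q^n f(t)$, upper limit $x$, and factor $(x-a)_q^{k-\alpha}$ --- are exactly what the paper's own final display confirms.
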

\begin{proof} Since $f\in AC^n_q[a,b]$  we have that
\begin{eqnarray}\label{additive4.3}
f(x) &=&\sum\limits_{k=0}^{n-1}\frac{\lim\limits_{x\rightarrow a}D_q^kf(x)}
{\Gamma_q\left(k+1\right)}\left(x-a\right)_q^{k}+ I^n_{q,a+}D_q^nf(x),
\end{eqnarray}
for all $x\in[a,b]$. By using Definition \ref{definition 2.2}, (\ref{additive2.1}) and (\ref{additive2.8}) we get that
\begin{eqnarray}\label{additive4.4}
D^\alpha_{q,a+}\left[\left(x-a\right)_q^k\right]&=& \left(D^\alpha_qI^{n-\alpha}_{q,a+}\left(t-a\right)_q^k\right)(x)\nonumber\\
&=&\frac{\Gamma_q(k+1)}{\Gamma_q(k+n-\alpha+1)} \left(D^n_q\left(t-a\right)_q^{k+n-\alpha}\right)(x)\nonumber\\
&=& \frac{\Gamma_q(k+1)}{\Gamma_q(k -\alpha+1)}  \left(t-a\right)_q^{k -\alpha}.
\end{eqnarray}

Applying the operator $D^\alpha_{q,a+}$ to both sides of (\ref{additive4.3}) and  using  (\ref{additive4.4}) and Lemma \ref{lem 2.3} b)  we have that
\begin{eqnarray*}
\left(D^\alpha_{q,a+}f\right)(x)&=& \sum\limits_{k=0}^{n-1}\frac{\lim\limits_{x\rightarrow a}D_q^kf(x)}
{\Gamma_q\left(k +1\right)}\left(D^\alpha_{q,a+}\left(t-a\right)_q^k\right)(x)+D^\alpha_{q,a+}\left(I^n_{q,a+}D_q^nf\right)(x)\\
&=&\sum\limits_{k=0}^{n-1}\frac{\lim\limits_{x\rightarrow a}D_q^kf(x)}
{\Gamma_q\left(k-\alpha+1\right)} \left(x-a\right)_q^{k-\alpha} + \left(I^{n-\alpha}_{q,a+}D_q^nf\right)(x),
\end{eqnarray*}
\end{proof}
so (\ref{additive4.3B}) follows from (\ref{additive2.8A}).

\begin{lem}\label{lem 4.4}  Let $y \in L_q^1(a, b)$, $n-1 <  \alpha \leq n, n \in \mathbb{N}$, $0 \leq \beta \leq 1$, $\gamma=(n-\alpha)(1-\beta)$ and
$I^\gamma_{q,a+}y \in AC_q^n[a, b] $. Then the following equality holds:
\begin{eqnarray}\label{additive4.5}
\left(I^ \alpha_{q,a+}D^{\alpha,\beta}_{q,a+}y \right)(x)&=&y(x)-y_{q,\alpha,\beta}(x),
\end{eqnarray}
where
$$
y_{q,\alpha,\beta}(x):=\sum\limits_{k=0}^{n-1}\frac{(x-a)_q^{k-\gamma}}
{\Gamma_q\left(k- \gamma+1\right)}\lim\limits_{x\rightarrow a+}\left(D_q^k I^{\gamma}_{q,a+}y\right)(x).
$$
\end{lem}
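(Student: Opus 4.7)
The plan is to reduce the identity to an integer-order $q$-Taylor expansion and then apply $D^\gamma_{q,a+}$ to both sides. First, by Definition \ref{definition4.2} and the arithmetic identity $\beta(n-\alpha) + \gamma = n-\alpha$, one has
$$D^{\alpha,\beta}_{q,a+} y = I^{\beta(n-\alpha)}_{q,a+} D_q^n I^\gamma_{q,a+} y.$$
Applying $I^\alpha_{q,a+}$ and invoking the semigroup property from Lemma \ref{lem 2.3} (a), together with $\alpha + \beta(n-\alpha) = n-\gamma$, yields
$$I^\alpha_{q,a+} D^{\alpha,\beta}_{q,a+} y = I^{n-\gamma}_{q,a+} D_q^n g, \qquad g := I^\gamma_{q,a+} y.$$
So the problem reduces to computing $I^{n-\gamma}_{q,a+} D_q^n g$ under the assumption $g \in AC^n_q[a,b]$.

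By hypothesis $g \in AC^n_q[a,b]$, so Lemma \ref{lem 2.5} (a) gives
$$g(x) = (I^n_{q,a+} D_q^n g)(x) + \sum_{k=0}^{n-1} \frac{\lim_{x\to a+} D_q^k g(x)}{[k]_q!}(x-a)^k_q,$$
the coefficients being identified by successive application of $D_q^j$ and the limit $x \to a+$, since the Jackson-integral term $I^n_{q,a+} D_q^n g$ and all of its $q$-derivatives of order $< n$ vanish at $x = a$. I would then apply $D^\gamma_{q,a+}$ to both sides. On the left, Lemma \ref{lem 2.3} (b) gives $D^\gamma_{q,a+} g = D^\gamma_{q,a+} I^\gamma_{q,a+} y = y$. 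On the right, the same lemma (using $n > \gamma$) yields $D^\gamma_{q,a+} I^n_{q,a+} D_q^n g = I^{n-\gamma}_{q,a+} D_q^n g$, while a short calculation from Definition \ref{definition 2.2} combined with \eqref{additive2.1} and \eqref{additive2.8} shows
$$D^\gamma_{q,a+}(x-a)^k_q = \frac{\Gamma_q(k+1)}{\Gamma_q(k-\gamma+1)}(x-a)^{k-\gamma}_q.$$

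Substituting everything back and using $[k]_q! = \Gamma_q(k+1)$ to cancel gives
$$y(x) = I^{n-\gamma}_{q,a+} D_q^n g(x) + \sum_{k=0}^{n-1} \frac{\lim_{x\to a+}(D_q^k I^\gamma_{q,a+} y)(x)}{\Gamma_q(k-\gamma+1)}(x-a)^{k-\gamma}_q,$$
which in view of the first step is precisely \eqref{additive4.5}. I expect the main technical point to be the precise identification of the Taylor-like coefficients $c_k$ at $x=a$ (which requires the vanishing of $D_q^j I^n_{q,a+} D_q^n g$ at $a$ for $j<n$, together with $D_q^j (x-a)^k_q \big|_{x=a} = [k]_q!\,\delta_{jk}$) and verifying that $\gamma \in [0,1) \subset [0,n)$ throughout, so that every invocation of Lemma \ref{lem 2.3} (b) is legitimate. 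The limiting cases $\gamma=0$ ($\beta=1$, Caputo) and $\gamma=n-\alpha$ ($\beta=0$, Riemann–Liouville) then fit as endpoints of the same formula.
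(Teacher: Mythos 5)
Your argument is correct and follows essentially the same route as the paper: the same reduction $I^{\alpha}_{q,a+}D^{\alpha,\beta}_{q,a+}y=I^{n-\gamma}_{q,a+}D_q^{n}I^{\gamma}_{q,a+}y$ via Definition \ref{definition4.2} and the semigroup property, followed by the $q$-Taylor expansion of $g=I^{\gamma}_{q,a+}y$ and an application of $D^{\gamma}_{q,a+}$ using $D^{\gamma}_{q,a+}I^{\gamma}_{q,a+}y=y$. The only cosmetic difference is that you inline the computation that the paper packages as Lemma \ref{lem4.3} (applied to $\widetilde{y}=I^{\gamma}_{q,a+}y$ with order $\gamma$), which if anything is slightly cleaner since it sidesteps the mismatch between $n$ and $[\gamma]$ in that lemma's hypotheses.
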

\begin{proof} From Lemma \ref{lem 2.3} a)  and Definition \ref{definition 2.2} it follows that
\begin{eqnarray}\label{additive4.6}
\left(I^ \alpha_{q,a+}D^{\alpha,\beta}_{q,a+}y \right)(x)&=&\left(I^ \alpha_{q,a+}I^{\beta(n-\alpha)}_{q,a+}
D^{\alpha+\beta n-\alpha\beta}_{q,a+}y \right)(x)\nonumber\\
&=&\left( I^{\alpha+\beta(n-\alpha)}_{q,a+}
D^{\alpha+\beta(n-\alpha)}_{q,a+}y \right)(x)\nonumber\\
&=&\left( I^{n-\gamma}_{q,a+}
D_q^nI^{\gamma}_{q,a+}y \right)(x).
\end{eqnarray}

 We define $\widetilde{y}:=I^{\gamma}_{q,a+}y \in AC_q^n[a, b]$. Then, according to  Lemma \ref{lem4.3}  and Lemma \ref{lem 2.3} b) (\ref{additive4.4}) and (\ref{additive4.6}), we find that
\begin{eqnarray*}
y(x)&=&\left(D_{q,a+}^\gamma \widetilde{y} \right)(x) \\
&=&\sum\limits_{k=0}^{n-1}\frac{\lim\limits_{x\rightarrow a}\left(D_q^k \widetilde{y}\right)(x)} {\Gamma_q\left(k-\gamma+1\right)}\left(x-a\right)_q^{k-\gamma} +\left(I^{n-\gamma}_{q,a+}D_q^n\widetilde{y}\right)(x)\\
&=&\sum\limits_{k=0}^{n-1}\frac{\lim\limits_{x\rightarrow a}\left(D_q^k I^{\gamma}_{q,a+}\right)(x)} {\Gamma_q\left(k-\gamma+1\right)}\left(x-a\right)_q^{k-\gamma} +\left(I^{n-\gamma}_{q,a+}D_q^nI^{\gamma}_{q,a+}y\right)(x)\\
&=&y_{q,\alpha,\beta}(x)+\left(I^ \alpha_{q,a+}D^{\alpha,\beta}_{q,a+}y \right)(x),
\end{eqnarray*}
which completes the proof.
\end{proof}

\subsection{ Equivalence of the Cauchy type $q$-fractional problem and a Volterra $q$-integral equation}

\begin{thm}\label{thm 4.1}  Let $G$ be an open set in $\mathbb{R}$,
$f(.,.):  ( a , b] \times G\rightarrow \mathbb{R}$ be a function such that $f(x,y(x)) \in L_q^1(a, b)$ for any $y \in G$,
$n-1<\alpha\leq n, n\in\mathbb{N}$, $0\leq \beta\leq 1$, $\gamma=(n-\alpha)(1-\beta)$ and assume that $I^\gamma_{q,a+}y \in AC_q^n[a, b]$.  Then $y(t)$ satisfies a.e. the relations (\ref{additive1.1})-(\ref{additive1.2}) if and only if $y(x)$ satisfies a.e. the integral equation
\begin{eqnarray}\label{additive4.11}
y(x):=\sum\limits_{k=0}^{n-1}\frac{b_k}{\Gamma_q\left(k-\gamma+1\right)}(x-a)_q^{k-\gamma}+
\left(I^\alpha_{q,a+}f(t,y(t))\right)(x).
\end{eqnarray}
\end{thm}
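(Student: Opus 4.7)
The proof should mirror the structure of Theorem 3.1, replacing the Riemann--Liouville fractional $q$-derivative by the Hilfer fractional $q$-derivative and invoking Lemma 4.4 in place of Lemma 2.5(b). The heart of the argument is the identity $(I^\alpha_{q,a+}D^{\alpha,\beta}_{q,a+}y)(x) = y(x) - y_{q,\alpha,\beta}(x)$ established in Lemma 4.4; once this is available, both directions reduce to careful bookkeeping with the semigroup property (Lemma 2.3(a)), the cancellation identity (Lemma 2.3(b)) and the power rule (2.8).

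\emph{Necessity.} Assuming $y$ satisfies \eqref{additive1.1}--\eqref{additive1.2}, I would apply the operator $I^\alpha_{q,a+}$ to both sides of \eqref{additive1.1}. Since $f(\cdot,y(\cdot))\in L^1_q(a,b)$ and $I^\gamma_{q,a+}y\in AC^n_q[a,b]$ by hypothesis, Lemma \ref{lem 4.4} applies and yields
\[
y(x) - y_{q,\alpha,\beta}(x) = (I^\alpha_{q,a+}f(t,y(t)))(x).
\]
The initial conditions \eqref{additive1.2} convert the limits $\lim_{x\to a+}(D_q^k I^\gamma_{q,a+}y)(x)$ inside $y_{q,\alpha,\beta}$ into the constants $b_k$, and substituting into the above line produces exactly \eqref{additive4.11}.

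\emph{Sufficiency, recovery of \eqref{additive1.1}.} Starting from \eqref{additive4.11}, I would apply $D^{\alpha,\beta}_{q,a+}=I^{\beta(n-\alpha)}_{q,a+}D^n_q I^{\gamma}_{q,a+}$ termwise. For each polynomial term $(x-a)_q^{k-\gamma}$ with $k\leq n-1$, formula \eqref{additive2.8} gives $I^\gamma_{q,a+}[(t-a)_q^{k-\gamma}] = \tfrac{\Gamma_q(k-\gamma+1)}{\Gamma_q(k+1)}(x-a)_q^k$, which is a $q$-polynomial of degree $k<n$, hence annihilated by $D^n_q$. For the integral term, the semigroup property gives $I^\gamma_{q,a+}I^\alpha_{q,a+} = I^{n-\beta(n-\alpha)}_{q,a+}$; then $D^n_q I^{n-\beta(n-\alpha)}_{q,a+}$ is the Riemann--Liouville fractional $q$-derivative of order $\beta(n-\alpha)<1$, so Lemma \ref{lem 2.3}(b) collapses $I^{\beta(n-\alpha)}_{q,a+}D^{\beta(n-\alpha)}_{q,a+}f$ to $f$. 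This yields \eqref{additive1.1}.

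\emph{Sufficiency, recovery of \eqref{additive1.2}.} Here I would apply $D^k_q I^\gamma_{q,a+}$ to both sides of \eqref{additive4.11}, for each $k=0,1,\ldots,n-1$, and then let $x\to a+$. On the polynomial part, \eqref{additive2.8} turns $(x-a)_q^{j-\gamma}$ into a constant multiple of $(x-a)_q^j$, and $D^k_q(x-a)_q^j$ vanishes for $j<k$, equals $\Gamma_q(k+1)$ for $j=k$, and tends to $0$ as $x\to a+$ for $j>k$; a short computation shows only the $j=k$ summand survives and contributes precisely $b_k$. On the integral part, using $I^\gamma_{q,a+}I^\alpha_{q,a+}=I^{n-\beta(n-\alpha)}_{q,a+}$ together with Lemma \ref{lem 2.3}(b) (applicable since $n-\beta(n-\alpha)>n-1\geq k$) rewrites $D^k_q I^\gamma_{q,a+}I^\alpha_{q,a+}f$ as $I^{n-\beta(n-\alpha)-k}_{q,a+}f$, a fractional integral of strictly positive order, which vanishes as $x\to a+$.

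The step I expect to be the most delicate is this last limiting calculation: checking that $D^k_q$ commutes past $I^\gamma_{q,a+}I^\alpha_{q,a+}$ in the intended fractional sense, and that the resulting $q$-integral genuinely tends to $0$ as $x\to a+$ only under the integrability hypothesis on $f(\cdot,y(\cdot))$. The polynomial bookkeeping is routine, but one has to be careful that the singular exponents $k-\gamma$ (which can be negative) do not spoil either the application of \eqref{additive2.8} or the $x\to a+$ limits once $I^\gamma_{q,a+}$ has been applied; this is what makes the hypothesis $I^\gamma_{q,a+}y\in AC^n_q[a,b]$ indispensable throughout.
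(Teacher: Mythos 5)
Your outline follows essentially the same route as the paper: necessity by applying $I^{\alpha}_{q,a+}$ to \eqref{additive1.1} and invoking Lemma \ref{lem 4.4} together with the initial conditions; sufficiency by applying $D^{\alpha,\beta}_{q,a+}$ and checking that the powers $(x-a)_q^{k-\gamma}$ are annihilated, then applying $D^{k}_q I^{\gamma}_{q,a+}$ and letting $x\to a+$ to recover \eqref{additive1.2}. Your justification for the annihilation of the polynomial terms (pass through $I^{\gamma}_{q,a+}$ via \eqref{additive2.8} to get a $q$-polynomial of degree $k<n$, then kill it with $D^n_q$) is if anything cleaner than the paper's, and your treatment of the limit $x\to a+$ of the integral term matches the paper's computation exactly.

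The one step that does not work as you have written it is the collapse $I^{\beta(n-\alpha)}_{q,a+}D^{\beta(n-\alpha)}_{q,a+}f \to f$ ``by Lemma \ref{lem 2.3}(b)''. Lemma \ref{lem 2.3}(b) gives compositions in the order derivative-after-integral, $D^{\beta}_{q,a+}I^{\alpha}_{q,a+}f=I^{\alpha-\beta}_{q,a+}f$; the composition $I^{\mu}_{q,a+}D^{\mu}_{q,a+}$ in the other order is governed by Lemma \ref{lem 2.5}(b) and in general produces boundary terms of the form $c\,(x-a)_q^{\mu-1}$, so it is not the identity for free. To close this you must either verify that the relevant boundary term vanishes (which it does here, because the function being differentiated is itself a fractional integral $I^{\gamma+\alpha}_{q,a+}f$ of an $L^1_q$ function of strictly positive order), or argue via injectivity of $I^{1-\mu}_{q,a+}$. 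To be fair, the paper is equally terse at this exact point (it simply asserts $\left(D^{\alpha,\beta}_{q,a+}I^{\alpha}_{q,a+}f\right)(x)=f(x,y(x))$), but since you explicitly name a lemma that does not apply in that order, you should repair the citation.
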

\begin{proof} \emph{Necessity.} Let $y(x) \in L^1_q[a, b]$ satisfy a.e. the relations (\ref{additive1.1})-(\ref{additive1.2}) and $f(x,y(x)) \in L_q^1(a, b)$ for any $y \in G$. Then  $\left(D^{\alpha,\beta}_{q,a+}y\right)(x)$ exists and belongs to $L^1_q[a, b]$ and, by Lemma \ref{lem 2.6}, we find that $\left(I^\alpha_{q,a+}f(t,y(t))\right)(x)\in L^1_q[a,b]$. By now applying the integral operator $I^\alpha_{q,a+}$ to both sides of (\ref{additive1.1}) and using the relation (\ref{additive4.5}) we obtain the equation (\ref{additive4.11}).  The necessity is proved.

\emph{Sufficiency.} Let $y(x) \in L^1_q[a, b]$ satisfy the equation (\ref{additive4.11}). Then, by applying the operator $D^{\alpha,\beta}_{q,a+}$  to both sides of (\ref{additive4.11}), we have that
\begin{eqnarray*}
\left(D^{\alpha,\beta}_{q,a+}y\right)(x)&=&\sum\limits_{k=1}^{n-1}\frac{b_k}{\Gamma_q\left(k-\gamma+1\right)}
\left(D^{\alpha,\beta}_{q,a+}(t-a)_q^{k-\gamma}\right)(x)\nonumber\\
&+&\left(D^{\alpha,\beta}_{q,a+}I^\alpha_{q,a+}f(t,y(t))\right)(x).
\end{eqnarray*}

Hence, by using Definition \ref{definition4.2}, (\ref{additive2.2}) and (\ref{additive2.8}), we find that
$\left(D^{\alpha,\beta}_{q,a+}(t-a)_q^{k-\gamma}\right)(x)=0$, for $k-\gamma=k-(n-\alpha)(1-\beta)=k-n+\alpha+\beta n-\alpha\beta < \alpha+\beta n-\alpha\beta, 0\leq k\leq n-1 $. Furthermore, it yields that
\begin{eqnarray*}
\left(D^{\alpha,\beta}_{q,a+}y\right)(x)&=& f(x,y(x)).
\end{eqnarray*}

Now we show that the relations (\ref{additive1.2}) also hold. For this we apply the operator $I^\gamma_{q,a+}$ to both sides of (\ref{additive4.11}) and use Lemma \ref{lem 2.3}   and (3.5) to conclude that
\begin{eqnarray}\label{additive4.12}
\left(I^\gamma_{q,a+}y\right)(x)&=&\sum\limits_{k=1}^{n-1}\frac{b_k}{\Gamma_q\left(k-\gamma+1\right)}
\left(I^\gamma_{q,a+}(t-a)_q^{k-\gamma}\right)(x)\nonumber\\
&+&\left(I^\gamma_{q,a+}I^\alpha_{q,a+}f(t,y(t))\right)(x)\nonumber\\
&=&\sum\limits_{k=1}^{n-1}\frac{b_k}{[k]_q!} (t-a)_q^k+\left(I^{n-n\beta+\alpha\beta}_{q,a+}f(t,y(t))\right)(x).
\end{eqnarray}

Let  $0\leq m\leq n-1$. Then, by using Definition \ref{definition4.2}, (\ref{additive2.2}), (\ref{additive2.8}) and  (\ref{additive1.2}), we obtain  that
\begin{eqnarray}\label{additive4.13}
D_q^m\left(I^\gamma_{q,a+}y\right)(x)&=&\sum\limits_{k=1}^{n-1}\frac{b_k}{[k-m]_q!} (t-a)_q^{k-m}\nonumber\\
&+&\frac{1}{\Gamma_q\left(n-n\beta+\alpha\beta-m\right)}\int\limits_a^x(x-qt)_q^{n-n\beta+\alpha\beta-m-1}f(t,y(t))d_qt.
\end{eqnarray}

Taking in (\ref{additive4.13}) the limit  $ x \rightarrow a+$ a.e., we obtain the relations in (\ref{additive1.2}). Thus also the sufficiency is proved, which completes the
proof.
\end{proof}

\subsection{  The existence and uniqueness of solutions to the Cauchy type $q$-fractional problem  (\ref{additive1.1})-(\ref{additive1.2}) in $L^1_{\alpha,\beta, q}[a,b]$}.

\vspace{0,3cm}
In this subsection we give conditions for a unique global solution to the Cauchy type
problem (\ref{additive1.1})-(\ref{additive1.2}) in the space $L^1_{\alpha,\beta, q}[a,b]$ defined for $\alpha>0$ by
\begin{eqnarray*}
L^1_{\alpha,\beta, q}[a,b]:=\left\{y\in L_q^1[a, b]: D^{\alpha,\beta}_{q,a+}y \in L_q^1[a, b]\right\}.
\end{eqnarray*}

\begin{thm}\label{thm 4.2}  Let $a > 0$,   $G\subset \mathbb{R}$ be an open set  and  $f(.,.) :
[a,b]\times G \rightarrow \mathbb{R}$ be a function such that $f(x,y(x))\in L_q^1[a,b]$ for any $y\in G$ and
satisfying the condition (\ref{additive3.6}). If $n-1<\alpha\leq n, n\in\mathbb{N}$, $0\leq \beta\leq 1$, $\gamma=(n-\alpha)(1-\beta)$, $I^\gamma_{q,a+}y \in AC_q^n[a, b]$, then there exists a unique solution $y(x)\in L^1_{\alpha,\beta, q}[a,b]$ to the Cauchy type problem (\ref{additive1.1})-(\ref{additive1.2}).

\begin{proof} We begin to prove the existence of a unique solution $y \in L_q^1[a, b]$. According to Theorem \ref{thm 4.1}, it is sufficient to prove the existence of a unique solution $y\in L_q^1[a, b]$ to the nonlinear Volterra integral equation (\ref{additive4.11}). Consequently, the equation (\ref{additive4.11}) can be written in the operator form $y = Fy$ such that
\begin{eqnarray}\label{additive4.14AA}
\left(Fy\right)(x):=y_0+\left[I^\alpha_{q,a+}f(t,y(t))\right](x),
\end{eqnarray}
where $y_0:=\sum\limits_{k=0}^{n-1}\frac{b_k}{\Gamma_q\left( k-\gamma+1\right)}(x-a)_q^{ k-\gamma}$.

Let $[a,\xi_1] \subset [a,b]$  be such that
\begin{eqnarray}\label{additive4.15AA}
\omega:=C\frac{\left(a-q\xi_1\right)_q^\alpha}{\Gamma_q\left(\alpha+1\right)}\leq1.
\end{eqnarray}

First we prove the following:  If $y\in L^1_q[a,\xi_1]$, then $(Ty)(x) \in L^1_q[a,\xi_1]$.  Indeed, since $y_0\in L^1_q[a,\xi_1]$, $f \left(x, y(x)\right) \in L^1_q[a,\xi_1]$, the integral on the right-hand side of (\ref{additive4.14AA}) also belongs to $L^1_q[a,\xi]$.
Hence, $(Ty)(x) \in L^1_q[a,\xi_1]$.

From (\ref{additive4.14AA}), (\ref{additive4.15AA}) and Lemma \ref{lem 2.6} it follows that
\begin{eqnarray*}
\| Fy_1-Fy_2\|_{L_q^1[a,\xi_1]}\leq C\|I^\alpha_{q,a+}\left(y_1-y_2\right)\|_{L_q^1[a,\xi_1]}\leq \omega\| y_1-y_2 \|_{L_q^1[a,\xi_1]},
\end{eqnarray*}
and the proof of our first claim is done. Since $L_q^1[a,\xi_1]$ is Banach space we are according to the Banach fixed point theorem (see e.g.\cite{KF1968}), there exists a unique solution $y'\in L_q^1[a,\xi_1]$ such that $Ty'=y'$.

Moreover, in view of this theorem, the solution $y'$ is obtained as a limit of a convergent sequence  $\left\{\left(F^my'_0\right)(x)\right\}$
\begin{eqnarray*}
\lim\limits_{m\rightarrow0}\|F^my'_0-y'\|_{L_q^1[a,\xi_1]}=0,
\end{eqnarray*}
in the space $L_q^1[a,\xi_1]$, where $ y_0$ is any function in $L_q^1[a,\xi_1]$.

If at least one $b_k\neq0$ in the initial condition
(\ref{additive1.2}), then we can take $y'_0=y_0$.  By (\ref{additive4.15AA}), the sequence $\left\{\left(F^my'_0\right)(x)\right\}$ is defined by
$$
\left(F^my'_0\right)(x):=y_0+\left[I^\alpha_{q,a+}f(t,F^{m-1}y'_0(t))\right](x),
$$
for $m\in\mathbb{N}$. Let $\left(y_m(x):=F^my'_0\right)(x)$. Then
$y_m(x)=y_0+\left[I^\alpha_{q,a+}f(t,y_{m-1} (t))\right](x)$ and
$$
\lim\limits_{m\rightarrow0}\|y_m-y'\|_{L_q^1[a,\xi_1]}=0.
$$

This means that we actually used the method of successive approximations to find a unique solution $y'(x)$ to the integral
equation (\ref{additive4.11}) on $[a,\xi_1]$. Next we consider the interval $[\xi_1, \xi_2]$, where $\xi_1= \xi_2 + h_1$, $h_1 > 0$ is such that $\xi_2 < \infty$. Rewrite the equation (\ref{additive4.11}) in the form
\begin{eqnarray}\label{additive4.16}
y(x)=y_0(x)+ \left(I^\alpha_{q,a+}f(t,y(t))\right)(\xi_1)+\left(I^\alpha_{q,\xi_1+}f(t,y(t))\right)(x).
\end{eqnarray}

Since the function $y(t)$ is uniquely defined on the interval $[a, \xi_1]$, the last integral can be considered as the known function,
and we can rewrite the last equation as
\begin{eqnarray}\label{additive4.14}
\left(Fy\right)(x):=y_{10}+\left[I^\alpha_{q,\xi_1+}f(t,y(t))\right](x),
\end{eqnarray}
where $y_{10}:=y_0(x)+ \left(I^\alpha_{q,a+}f(t,y(t))\right)(\xi_1)$.

In a similar way as above, we get  that there exists a unique solution $y' \in L^1_q[\xi_1, \xi_2]$
for (\ref{additive4.11}) on the interval $[\xi_1, \xi_2]$. Taking the next interval $[\xi_2, \xi_3]$, where $\xi_3 = \xi_2 + h_2$, $h_2 > 0$,$ \xi_3 <\infty$, and repeating the procedure, we conclude that there exists a unique solution $y' \in L^1_q[a, b]$  for (\ref{additive4.11})  and hence to the Cauchy type problem (\ref{additive1.1})-(\ref{additive1.2}).

Finally,    we must show that such a unique solution $y\in L^1_q[a, b]$ belongs to the space $L^1_{\alpha, \beta, q}[a, b]$. According to the above proof, the solution $y\in L^1_q[a, b]$ is a limit of the sequence $\left\{y_m\right\}\in L^1_q[a, b]$:
\begin{eqnarray}\label{additive4.15}
\lim\limits_{m\rightarrow\infty} \|y_m-y\|_{L^1_q[a, b]}=0,
\end{eqnarray}
with the choice of certain $y_m$ on each $[a, \xi_1], [\xi_1, \xi_2], \dots , [\xi_{L-1}, b]$. Since
\begin{eqnarray*}
\|D^{\alpha,\beta}_{q,a+}y_m-D^{\alpha,\beta}_{q,a+}y\|_{L^1_q[a, b]}=\|f\left(x,y_m\right)-f\left(x,y\right)\|_{L^1_q[a, b]}\leq A
\| y_m- y\|_{L^1_q[a, b]},
\end{eqnarray*}
by (\ref{additive4.15}), we obtain that
\begin{eqnarray*}
\lim\limits_{m\rightarrow\infty}\|D^{\alpha,\beta}_{q,a+}y_m-D^{\alpha,\beta}_{q,a+}y\|_{L^1_q[a, b]}=0,
\end{eqnarray*}
and hence $D^{\alpha,\beta}_{q,a+}\in L^1_q[a, b]$. This completes the proof.
\end{proof}

\end{thm}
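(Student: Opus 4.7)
The plan is to mimic the strategy of Theorem \ref{thm 3.2}, but now the crucial reduction step is Theorem \ref{thm 4.1} together with Lemma \ref{lem 4.4}. First I would invoke Theorem \ref{thm 4.1} to replace the Cauchy type problem \eqref{additive1.1}--\eqref{additive1.2} by the equivalent nonlinear Volterra $q$-integral equation \eqref{additive4.11}, so that the whole problem is reduced to finding a unique fixed point $y=Fy$ in $L_q^1[a,b]$ of the operator
\begin{equation*}
(Fy)(x):=y_0(x)+\bigl(I^{\alpha}_{q,a+}f(t,y(t))\bigr)(x),\qquad y_0(x):=\sum_{k=0}^{n-1}\frac{b_k}{\Gamma_q(k-\gamma+1)}(x-a)_q^{k-\gamma}.
\end{equation*}
Here $y_0$ is a fixed element of $L_q^1[a,b]$ and, by the hypothesis $f(x,y(x))\in L_q^1[a,b]$ together with Lemma \ref{lem 2.6}, the second summand lies in $L_q^1[a,b]$ as well, so $F$ maps $L_q^1$ into $L_q^1$.

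Next I would localize to a small subinterval $[a,\xi_1]\subset[a,b]$ chosen so that, with $K$ as in Lemma \ref{lem 2.6},
\begin{equation*}
\omega_1:=C\,\frac{(\xi_1-qa)_q^{\alpha}}{\Gamma_q(\alpha+1)}<1.
\end{equation*}
Combining the Lipschitz condition \eqref{additive3.6} with Lemma \ref{lem 2.6} applied on $[a,\xi_1]$ yields
\begin{equation*}
\|Fy_1-Fy_2\|_{L_q^1[a,\xi_1]}\le C\,\|I^{\alpha}_{q,a+}(y_1-y_2)\|_{L_q^1[a,\xi_1]}\le \omega_1\,\|y_1-y_2\|_{L_q^1[a,\xi_1]},
\end{equation*}
so $F$ is a contraction on the Banach space $L_q^1[a,\xi_1]$ and the Banach fixed point theorem gives a unique $y^{\ast}\in L_q^1[a,\xi_1]$ with $Fy^{\ast}=y^{\ast}$, obtainable as a limit of the successive approximations $y_m=F^m y_0$.

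The key technical step, which I expect to be the main obstacle, is the continuation of this local solution to all of $[a,b]$. Following the blueprint sketched in the statement, I would split the equation as in \eqref{additive4.16} over $[\xi_1,\xi_2]$ with $\xi_2=\xi_1+h_1$, treat the already determined integral over $[a,\xi_1]$ as a known inhomogeneous term absorbed into a new $y_{10}\in L_q^1[\xi_1,\xi_2]$, and rerun the contraction argument with initial point $\xi_1$ in place of $a$; the step size $h_j$ has to be chosen only so as to keep the analogue of $\omega_j<1$, which by continuity of $(\,\cdot\,-q\xi_j)_q^{\alpha}$ in the right endpoint can be done with a uniform lower bound on $h_j$. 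Hence after finitely many steps $a<\xi_1<\xi_2<\dots<\xi_L=b$ we obtain a unique $y\in L_q^1[a,b]$ solving \eqref{additive4.11}, which by Theorem \ref{thm 4.1} is the unique solution of \eqref{additive1.1}--\eqref{additive1.2}.

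Finally, to upgrade the membership to $L^1_{\alpha,\beta,q}[a,b]$, I would use the equation itself. Writing $y$ as the $L^1$-limit of the approximants $y_m$ from the preceding steps and applying the Lipschitz condition \eqref{additive3.6}, one gets
\begin{equation*}
\|D^{\alpha,\beta}_{q,a+}y_m-D^{\alpha,\beta}_{q,a+}y\|_{L_q^1[a,b]}=\|f(\cdot,y_m)-f(\cdot,y)\|_{L_q^1[a,b]}\le C\,\|y_m-y\|_{L_q^1[a,b]}\longrightarrow 0,
\end{equation*}
so that $D^{\alpha,\beta}_{q,a+}y\in L_q^1[a,b]$ and therefore $y\in L^1_{\alpha,\beta,q}[a,b]$, completing the proof.
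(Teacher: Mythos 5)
Your proposal follows essentially the same route as the paper's own proof: reduction to the Volterra $q$-integral equation via Theorem \ref{thm 4.1}, a local contraction argument in $L_q^1[a,\xi_1]$ based on the Lipschitz condition \eqref{additive3.6} and Lemma \ref{lem 2.6}, stepwise continuation to $[a,b]$, and the final Lipschitz estimate showing $D^{\alpha,\beta}_{q,a+}y\in L_q^1[a,b]$. If anything, your version is slightly cleaner, since you use the strict inequality $\omega_1<1$ required by the Banach fixed point theorem and note that the step sizes $h_j$ admit a uniform lower bound, points the paper glosses over.
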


\begin{cor}\label{cor 4.1}  Let $G$ be an open set in $\mathbb{R}$ and
$f(.,.):  ( a , b] \times G\rightarrow \mathbb{R}$ be a function such that $f(x,y(x)) \in L_q^1(a, b)$ for any $y \in G$. If
$0<\alpha\leq 1$, $0\leq \beta\leq 1$, $\gamma=(1-\alpha)(1-\beta)$, $I^\gamma_{q,a+}y \in AC_q^n[a, b]$,  then $y(t)$ satisfies a.e. the relations (\ref{additive1.3})-(\ref{additive1.14}) if and only if, $y(x)$ satisfies a.e. the integral equation
\begin{eqnarray}\label{additive4.14}
y(x):=b\frac{(x-a)_q^{k-\gamma}}{\Gamma_q\left(k-\gamma+1\right)}+
\left(I^\alpha_{q,a+}f(t,y(t))\right)(x).
\end{eqnarray}
\end{cor}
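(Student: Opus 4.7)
The plan is to observe that Corollary \ref{cor 4.1} is precisely the specialization of Theorem \ref{thm 4.1} to the case $n=1$, so the entire statement will follow by appealing to the general equivalence already established, with no new argument needed beyond checking that the reduction is legitimate. Since $0<\alpha\leq 1$, the general condition $n-1<\alpha\leq n$ forces $n=1$, and with this choice the operator in Definition \ref{definition4.2} becomes
\begin{eqnarray*}
\left(D^{\alpha,\beta}_{q,a+}f\right)(x) &=& \left(I_{q,a+}^{\beta(1-\alpha)}D_q\left(I_{q,a+}^{(1-\beta)(1-\alpha)}f\right)\right)(x),
\end{eqnarray*}
which coincides exactly with $\left(\mathcal{D}^{\alpha,\beta}_{q,a+}f\right)(x)$ from Definition \ref{definition4.1}. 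Consequently the Cauchy type problem (\ref{additive1.3})--(\ref{additive1.14}) is literally the $n=1$ instance of (\ref{additive1.1})--(\ref{additive1.2}), and the exponent $(1-\alpha)(1-\beta)$ appearing in (\ref{additive1.14}) is exactly $\gamma=(n-\alpha)(1-\beta)$ in the notation of Theorem \ref{thm 4.1}.

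Next, I would verify that every hypothesis of Theorem \ref{thm 4.1} is granted by the assumptions of the corollary: the integrability condition on $f$ is identical, the range restrictions on $\alpha$ and $\beta$ are compatible, and the regularity assumption $I^\gamma_{q,a+}y\in AC_q^n[a,b]$ (with $n=1$) is provided. Theorem \ref{thm 4.1} therefore applies directly and yields the equivalence between (\ref{additive1.3})--(\ref{additive1.14}) and the integral equation (\ref{additive4.11}).

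Finally, I would specialize (\ref{additive4.11}) to $n=1$: the sum $\sum_{k=0}^{n-1}$ collapses to the single term $k=0$, and upon renaming $b:=b_0$ the identity reduces exactly to (\ref{additive4.14}). The only verification genuinely required is that Definitions \ref{definition4.1} and \ref{definition4.2} agree when $n=1$, which is immediate from the defining formulas; hence there is no real obstacle and the corollary is a direct corollary (as its name suggests) of Theorem \ref{thm 4.1}.
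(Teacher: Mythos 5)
Your proposal is correct and matches the paper's (implicit) intent: the paper offers no separate proof of Corollary \ref{cor 4.1}, treating it as the direct specialization of Theorem \ref{thm 4.1} to $n=1$, which is exactly what you carry out, including the check that Definitions \ref{definition4.1} and \ref{definition4.2} coincide for $n=1$ and that the sum in (\ref{additive4.11}) collapses to the single $k=0$ term with $b=b_0$.
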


\begin{cor}\label{cor 4.2}Let $a > 0$,   $G\subset \mathbb{R}$ be an open set  and  $f(.,.) :
[a,b]\times G \rightarrow \mathbb{R}$ be a function such that $f(x,y(x))\in L_q^1[a,b]$ for any $y\in G$ and
satisfies the condition (\ref{additive3.6}). If $0<\alpha\leq 1$,  $0\leq \beta\leq 1$, $\gamma=(n-\alpha)(1-\beta)$, $I^\gamma_{q,a+}y \in AC_q^n[a, b]$. Then there exists a unique solution $y(x)\in L^1_{\alpha,\beta, q}[a,b]$ to the Cauchy type problem (\ref{additive1.3})-(\ref{additive1.14}).
\end{cor}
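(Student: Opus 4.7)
The plan is to recognize Corollary \ref{cor 4.2} as the direct specialization of Theorem \ref{thm 4.2} to the case $n=1$. Since the hypothesis $0<\alpha\leq 1$ forces $n=1$ in the framework of Theorem \ref{thm 4.2}, the exponent $\gamma=(n-\alpha)(1-\beta)=(1-\alpha)(1-\beta)$ coincides with the one stated in the corollary, the Lipschitz assumption (\ref{additive3.6}) transfers verbatim, and the regularity hypothesis $I^\gamma_{q,a+}y \in AC^n_q[a,b]$ reduces to $I^\gamma_{q,a+}y \in AC_q[a,b]$. Hence Theorem \ref{thm 4.2} applies and immediately delivers the desired unique $y\in L^1_{\alpha,\beta,q}[a,b]$ solving (\ref{additive1.3})--(\ref{additive1.14}).

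For a self-contained argument mirroring the structure of the proof of Theorem \ref{thm 4.2}, I would first invoke Corollary \ref{cor 4.1} to convert the Cauchy type problem into the equivalent nonlinear Volterra $q$-integral equation (\ref{additive4.14}) and then rewrite it as a fixed point equation $y=Fy$ with
\begin{eqnarray*}
(Fy)(x) := b\,\frac{(x-a)_q^{k-\gamma}}{\Gamma_q(k-\gamma+1)} + \left(I^\alpha_{q,a+}f(t,y(t))\right)(x).
\end{eqnarray*}
Next I would choose $\xi_1 \in (a,b]$ small enough that $\omega := C(a-q\xi_1)_q^\alpha/\Gamma_q(\alpha+1) \leq 1$, combine Lemma \ref{lem 2.6} with the Lipschitz condition (\ref{additive3.6}) to obtain the contraction estimate $\|Fy_1-Fy_2\|_{L^1_q[a,\xi_1]} \leq \omega\|y_1-y_2\|_{L^1_q[a,\xi_1]}$, and apply the Banach fixed point theorem to produce a unique local solution on $[a,\xi_1]$. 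Then I would extend this solution step by step across subintervals $[\xi_1,\xi_2], [\xi_2,\xi_3], \ldots$ until the whole $[a,b]$ is exhausted, exactly as in the continuation argument of Theorem \ref{thm 4.2}.

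Once such a unique $y\in L^1_q[a,b]$ is in hand, I would verify the regularity $y\in L^1_{\alpha,\beta,q}[a,b]$ by passing to the limit in the successive approximations $\{y_m\}$ and exploiting the Lipschitz estimate $\|D^{\alpha,\beta}_{q,a+}y_m - D^{\alpha,\beta}_{q,a+}y\|_{L^1_q[a,b]} = \|f(\cdot,y_m)-f(\cdot,y)\|_{L^1_q[a,b]} \leq C\|y_m-y\|_{L^1_q[a,b]}$, which forces $D^{\alpha,\beta}_{q,a+}y\in L^1_q[a,b]$. I do not expect any substantial obstacle: the only delicate point is to check that every appearance of $n$ in the hypotheses of Theorem \ref{thm 4.2} is consistently specialized to $n=1$ in the present setting, after which the conclusion follows without any essential modification of the argument.
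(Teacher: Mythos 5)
Your proposal is correct and matches the paper's intent exactly: the paper gives no separate proof of Corollary \ref{cor 4.2}, treating it as the immediate specialization of Theorem \ref{thm 4.2} to $n=1$ (so that $\gamma=(1-\alpha)(1-\beta)$ and the problem (\ref{additive1.1})--(\ref{additive1.2}) reduces to (\ref{additive1.3})--(\ref{additive1.14})), which is precisely your primary argument. Your optional self-contained rerun of the fixed-point and continuation argument is faithful to the proof of Theorem \ref{thm 4.2} but is not needed.
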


\section{  Examples}

In this section we present some  examples and discuss these examples  in connection with the results obtained in Sections 3 and 4. Our examples are $q$-analogues of examples given in \cite[Examples 3.1-3.2]{KST2006}.
\begin{exa} Let $\lambda,\alpha \in \mathbb{R}^+$, $0\leq\beta\leq 1$ and $\gamma \in\mathbb{R}$ be such that $-2\alpha-\gamma+1>0$. Then we consider
 $$
 f(x,y(x)):=\lambda   \frac{(x-q^{-\alpha-\gamma+1}a)_q^{\alpha+\gamma}}{(x-q^{-2\alpha-\gamma+1}a)_q^\alpha}  y^2(x),
 $$
and assume that $n-1<\alpha\leq n, n\in\mathbb{N}$, $0\leq \beta\leq 1$, $\gamma=(n-\alpha)(1-\beta)$ and $I^\gamma_{q,a+}y \in AC_q^n[a, b]$.

Moreover, we choose $G$ so that
$$
(x,y)\in[qa,b]\times G:=\left\{qa\leq x\leq b;  \left|y(x)\right|<M<\infty\right\}.
$$
We note that $f(x,y(x))$ satisfies (\ref{additive3.6}) since, for $-2\alpha-\gamma+1>0$,
$x\in[qa,b]$ and $y_1; y_2 \in G$,
\begin{eqnarray*}
\left|f(x,y_1(x))-f(x,y_2(x))\right|&=& \lambda \left|\frac{(x-q^{-\alpha-\gamma+1}a)_q^{\alpha+\gamma}}{(x-q^{-2\alpha-\gamma+1}a)_q^\alpha}  y^2_1(x)\right.\\
&-&\left.\frac{(x-q^{-\alpha-\gamma+1}a)_q^{\alpha+\gamma}}{(x-q^{-2\alpha-\gamma+1}a)_q^\alpha}  y^2_2(x) \right|\\
&\leq&\lambda\frac{(b-q^{-\alpha-\gamma+1}a)_q^{\alpha+\gamma}}{(a-q^{-2\alpha-\gamma+1}a)_q^\alpha} \left|y_1 (x)+y_2(x)\right|\left|y_1 (x)-  y_2(x)\right|\\
&\leq&\lambda\frac{(b-q^{-\alpha-\gamma+1}a)_q^{\alpha+\gamma}}{(a-q^{-2\alpha-\gamma+1}a)_q^\alpha}2M\left|y_1 (x)-  y_2(x)\right|,
\end{eqnarray*}
which proves that $f$ satisfies the Lipschitz condition (\ref{additive3.6}) in Theorem \ref{thm 4.2}.

Hence,
\begin{eqnarray*}
\int\limits_{qa}^b\left|f(x,y (x))\right|d_qx&=& \lambda \int\limits_{qa }^b\frac{(x-q^{-\alpha-\gamma+1}a)_q^{\alpha+\gamma}}{(x-q^{-2\alpha-\gamma+1}a)_q^\alpha}\left|y(x)\right|^2 d_qx\\
&\leq& \lambda \frac{(b-q^{-\alpha-\gamma+1}a)_q^{\alpha+\gamma}}{(a-q^{-2\alpha-\gamma+1}a)_q^\alpha} M^2,
\end{eqnarray*}
which means that $f(x,y (x))\in L_q^1[qa,b]$. We can conclude that, according to Theorem \ref{thm 4.2}, the equation

\begin{eqnarray}\label{additive5.1}
\left(D^{\alpha,\beta}_{q,qa+}y\right)(x)&=&  \lambda \frac{(x-q^{-\alpha-\gamma+1}a)_q^{\alpha+\gamma}}{(x-q^{-2\alpha-\gamma+1}a)_q^\alpha}  y^2(x) ,
\end{eqnarray}
\begin{eqnarray}\label{additive5.2}
\lim\limits_{x\rightarrow a+}\left(I^{(1-\alpha)(1-\beta)}_{q,a+}y \right)(x)=0,
\end{eqnarray}
has a unique solution. Note also that in this case, according to Lemma \ref{lem 2.3} the condition (\ref{additive1.2}) with $b_k=0$, $k=0,1,..n-1$ coincides with (\ref{additive5.2}).

Moreover, in this special case $2\alpha+\gamma<1$ we claim that this unique solution is given by
\begin{eqnarray}\label{additive5.2AAA}
y(x)=\frac{1}{\lambda} \frac{\Gamma_q\left(1-\alpha-\gamma\right)}{\Gamma_q\left(1-2\alpha-\gamma\right)}
\left(x- qa\right)^{-\alpha-\gamma}_q.
\end{eqnarray}

In fact, by applying the operator $D^{\alpha,\beta}_{q,qa+}$ and using in order (\ref{additive4.2}), (\ref{additive2.8}), (\ref{additive2.1}), (\ref{additive2.0}) and (\ref{additive2.8}), we get that
\begin{eqnarray*}
D^{\alpha,\beta}_{q,qa+}\left[\left(t- qa\right)^{-\alpha-\gamma}_q\right](x)  &=& \left(I_{q,a+}^{\beta(1-\alpha)}
 D_qI_{q,a+}^{ (1-\beta)(1-\alpha)}(t-qa)_q^{-\alpha-\gamma}\right)(x)\\
&=&\frac{\left(I_{q,a+}^{\beta(1-\alpha)}
\left(D_q\right)(t-qa)_q^{1-2\alpha-\beta+\alpha\beta-\gamma}\right)(x) }{\frac{\Gamma_q{(2-2\alpha-\beta+\alpha\beta-\gamma)}}{\Gamma_q{(1-\alpha-\gamma)}}} \\
&=&\frac{\left(I_{q,a+}^{\beta(1-\alpha)}
(t-qa)_q^{-2\alpha-\beta+\alpha\beta-\gamma}\right)(x)}
{\frac{\Gamma_q{(2-2\alpha-\beta+\alpha\beta-\gamma)}}{\left[1-2\alpha-\beta+\alpha\beta-\gamma\right]_q\Gamma_q{(1-\alpha-\gamma)}}}\\
&=&\frac{\left(I_{q,a+}^{\beta(1-\alpha)}
(t-qa)_q^{-2\alpha-\beta+\alpha\beta-\gamma}\right)(x)}{ \frac{\Gamma_q{(1-2\alpha-\beta+\alpha\beta-\gamma)}}{\Gamma_q{(1-\alpha-\gamma)}}}\\
&=&\frac{\frac{\Gamma_q{(1-2\alpha-\beta+\alpha\beta-\gamma)}}{\Gamma_q{(1+\beta-\alpha\beta-2\alpha-\beta+\alpha\beta-\gamma)}}
(x-qa)_q^{\beta-\alpha\beta-2\alpha-\beta+\alpha\beta-\gamma}}
{\frac{\Gamma_q{(1-\alpha-\gamma)}}{\Gamma_q{(1-2\alpha-\beta+\alpha\beta-\gamma)}}}\\
&=&\frac{\Gamma_q\left(1-\alpha-\gamma\right)}{\Gamma_q\left(1-2\alpha-\gamma\right)}
\left(x- qa\right)^{-2\alpha-\gamma}_q,
\end{eqnarray*}
for $2\alpha+\gamma<1$. Therefore,
\begin{eqnarray}\label{additive5.2A}
 D^{\alpha,\beta}_{q,qa+}\left[y(x)\right]
=\frac{1}{\lambda}\left[\frac{\Gamma_q\left(1-\alpha-\gamma\right)}{\Gamma_q\left(1-2\alpha-\gamma\right)}\right]^2
\left(x- qa\right)^{-2\alpha-\gamma}_q,
\end{eqnarray}

Hence, by using the well-known formulas
$$
(x-q^{-\alpha-\gamma+1}a)_q^{\alpha+\gamma}(x-qa)_q^{-\alpha-\gamma}=1 \;\; \text{and}\;\;
 (x-qa)_q^{-\alpha-\gamma}=(x-qa)_q^{-2\alpha-\gamma}(x-q^{-2\alpha-\gamma}a)_q^\alpha,
 $$
 in q-analysis (see [\cite{CK2000}, formula (3.7)]) we have that
\begin{eqnarray}\label{additive5.2B}
 f(x,y(x))&=&\lambda   \frac{(x-q^{-\alpha-\gamma+1}a)_q^{\alpha+\gamma}}{(x-q^{-2\alpha-\gamma+1}a)_q^\alpha}  y^2(x)\nonumber\\
&=&\frac{1}{\lambda}\left[\frac{\Gamma_q\left(1-\alpha-\gamma\right)}{\Gamma_q\left(1-2\alpha-\gamma\right)}\right]^2
\times\frac{(x-q^{-\alpha-\gamma+1}a)_q^{\alpha+\gamma}}{(x-q^{-2\alpha-\gamma+1}a)_q^\alpha}
\left[(x-qa)_q^{-\alpha-\gamma}\right]^2\nonumber\\
&=&\frac{1}{\lambda}\left[\frac{\Gamma_q\left(1-\alpha-\gamma\right)}{\Gamma_q\left(1-2\alpha-\gamma\right)}\right]^2
 (x-qa)_q^{-2\alpha-\gamma}=D_{q,qa+}^{\alpha,\beta}\left[y(x)\right],
\end{eqnarray}
i.e. $y(x)$ defined by (\ref{additive5.2AAA}) satisfies (\ref{additive5.1}).

Now, by combining (\ref{additive5.2A}) and using that  (\ref{additive5.2B}) and  $\lim\limits_{x\rightarrow qa}\left(x- qa\right)^{-\alpha-\gamma}_q=0$ for $x\in[qa,b]$,   we see that $y(x)$ defined by (\ref{additive5.2AAA}) satisfies (\ref{additive5.1})-(\ref{additive5.2}). The proof of our claim is complete.

\end{exa}

\begin{exa} Let $\alpha, \lambda\in\mathbb{R}^+$ and $0\leq\beta\leq 1$. Then we consider the following differential equation:
\begin{eqnarray}\label{additive5.3}
\left(D^\alpha_{q,qa+}y\right)(x) &=&  \lambda   \frac{\left[(x-qa )_q^{2\alpha+2\beta}\right]^\frac{1}{2}}
{(x-q^{\alpha+2\beta+1}a )_q^\alpha } \left[y(x)\right]^\frac{1}{2},
\end{eqnarray}
\begin{eqnarray}\label{additive5.4}
D^{\alpha-k}_{q,qa+}y(qa+)&=&0,
\end{eqnarray}
where $n-1<\alpha\leq{n}$, $n\in N$, $k=0,1,2,...,n-1$.

Let $f(x,y(x)):=\lambda   \frac{\left[(x-qa )_q^{2\alpha+2\beta}\right]^\frac{1}{2}}
{(x-q^{\alpha+2\beta+1}a )_q^\alpha } \left[y(x)\right]^\frac{1}{2}$ and assume $n-1<\alpha\leq n, n\in\mathbb{N}$, $0\leq \beta\leq 1$, $\gamma=(n-\alpha)(1-\beta)$ and $I^\gamma_{q,a+}y \in AC_q^n[a, b]$.

Moreover, let $G$ be such that
$$
(x,y)\in[qa,b]\times G:=\left\{a\leq x\leq b;  M_1< y(x) <M_2,   0<M_1,M_2\right\}.
$$

Then, for $x\in[a, b]$ and $y_1, y_2\in G$ we have that
\begin{eqnarray*}
|f(x,y_1(x))-f(x,y_2(x))|&\leq&\lambda\frac{\left[(b-qa )_q^{2\alpha+2\beta}\right]^\frac{1}{2}}
{(a-q^{\alpha+2\beta+1}a )_q^\alpha }\left|y^\frac{1}{2}_1(x)-y^\frac{1}{2}_2(x)\right|\\
&=&\lambda\frac{\left[(b-qa )_q^{2\alpha+2\beta}\right]^\frac{1}{2}}
{(a-q^{\alpha+2\beta+1}a )_q^\alpha }\frac{\left|y_1(x)-y_2(x)\right|}{y^\frac{1}{2}_1(x)+y^\frac{1}{2}_2(x)}\\
&\leq&\frac{\lambda}{2M_1}\frac{\left[(b-qa )_q^{2\alpha+2\beta}\right]^\frac{1}{2}}
{(a-q^{\alpha+2\beta+1}a )_q^\alpha }\left|y_1(x)-y_2(x)\right|,
\end{eqnarray*}
which proves that f satisfies the Lipschitz condition (\ref{additive3.6}) in Theorem \ref{thm 4.2}.

Moreover,
\begin{eqnarray*}
\int\limits_{qa}^b\left|f(x,y (x))\right|d_qx
 \leq  \lambda \frac{\left[(b-qa )_q^{2\alpha+2\beta}\right]^\frac{1}{2}}
{(a-q^{\alpha+2\beta+1}a )_q^\alpha }  M^\frac{1}{2},
\end{eqnarray*}
which means that $f(x,y (x))\in L_q^1[qa,b]$.  Therefore, by using Theorem \ref{thm 4.2}, we conclude that (\ref{additive5.3})-(\ref{additive5.4}) has a unique solution in $L_{q}^1[qa,b]$. By using similar argument as in our Example 5.1 we can in fact prove that this exact solution is:
\begin{eqnarray*}
y(x)=\left[\lambda  \frac{\Gamma_q\left(\alpha+2\beta+1\right)}{\Gamma_q\left(2\alpha+2\beta+1\right)}
\right]^2\left(x-qa\right)^{2\alpha+2\beta}_q.
\end{eqnarray*}
\end{exa}

\begin{rem}
  By arguing as in our Examples 5.1 and 5.2 we can also derive a similar $q$-analogue of Example 3.3 in \cite{KST2006}.
\end{rem}

\textbf{Acknowledgement:} We thank the referee for some general advices, which have improved the final version of the paper. The third author was supported by Ministry of Education and Science of the Republic of Kazakhstan Grant AP08052208.

\begin{center}

\end{center}

\end{document}